\newcommand{\bd}{{\mathbb{D}}}
\newcommand{\br}{{\mathbb{R}}}
\newcommand{\bc}{{\mathbb{C}}}
\newcommand{\bt}{{\mathbb{T}}}
\newcommand{\css}{{\mathcal{S}}}
\renewcommand{\a}{\alpha}
\renewcommand{\b}{\beta}
\renewcommand{\l}{\lambda}
\renewcommand{\ll}{\Lambda}
\newcommand{\s}{\sigma}
\renewcommand{\r}{\rho}
\newcommand{\p}{\varphi}
\renewcommand{\t}{\tau}
\renewcommand{\th}{\theta}
\renewcommand{\d}{\delta}
\renewcommand{\o}{\omega}
\newcommand{\oo}{{\Omega}}
\newcommand{\g}{\gamma}
\renewcommand{\gg}{\Gamma}
\newcommand{\ep}{\varepsilon}
\newcommand{\z}{\zeta}
\newcommand{\nt}{\noindent}
\newcommand{\ovl}{\overline}
\newcommand{\lp}{\left(}
\newcommand{\rp}{\right)}
\newcommand{\lt}{\left\{}
\newcommand{\rt}{\right\}}
\numberwithin{equation}{section}
\newtheorem{theorem}{Theorem}[section]
\newtheorem{corollary}[theorem]{Corollary}
\newtheorem{proposition}[theorem]{Proposition}
\theoremstyle{definition}
\newtheorem{remark}[theorem]{Remark}
\begin{document}

\title[Blaschke-type condition]
{On a Blaschke-type condition for subharmonic functions with two sets of singularities on the boundary}

\author[S. Favorov]{S. Favorov}

\address{Karazin Kharkiv National University,
4 Svobody sq., Kharkiv 61022, Ukraine}
\email{sfavorov@gmail.com}

\author[L. Golinskii]{L. Golinskii}

\address{B. Verkin Institute for Low Temperature Physics and
Engineering, 47 Science ave., Kharkiv 61103, Ukraine}
\email{golinskii@ilt.kharkov.ua}

\date{\today}

\keywords{subharmonic functions; Riesz measure; harmonic majorant; the Green's function; layer cake representation; harmonic measure}
\subjclass[2010]{}

\maketitle

\begin{abstract}
Given two compact sets, $E$ and $F$, on the unit circle, we study the class of subharmonic functions on the unit disk which can
grow at the direction of $E$ and $F$ (sets of singularities) at different rate. The main result concerns the Blaschke-type condition
for the Riesz measure of such functions. The optimal character of such condition is demonstrated.
\end{abstract}

\vspace{0.5cm}
\begin{center}
{\large\it To Victor Katsnelson on occasion of his 75-th anniversary}
\end{center}

\vspace{0.5cm}
\section*{Introduction}
\label{s0}

In 1915, around a century ago, a seminal paper (6-pages note!) \cite{bla} by W.~Blaschke came out. A condition widely known nowadays as the
{\it Blaschke condition} for zeros of bounded analytic functions on the unit disk $\bd$
\begin{equation}\label{fundbla}
\sum_{\z\in Z(f)} (1-|\z|)<\infty
\end{equation}
was announced in this gem of Complex Analysis. Around 50 years ago both the authors learned about the Blaschke condition
from VK, being his graduate students.

It is not our intention reviewing a vast literature with various refinements
and far reaching extensions of \eqref{fundbla}, which appeared since then. We mention only that in all such extensions the majorants
of the (unbounded) functions in question were {\it radial}, that is, they depended on the {\it absolute value} of the argument. In other words,
the function was allowed to grow uniformly near the unit circle $\bt$.

We came across functions with {\it non-radial} growth for the first time in a result of Killip and Simon \cite[Theorem 2.8]{KiSi03}, where this bound looked
\begin{equation}\label{kilsim}
\log |L(z,J)|\le \frac{C}{|z^2-1|^2}\,, \quad z\in\bd.
\end{equation}
In the spectral theory setting of this paper the function $L$ (the perturbation determinant) turned out to belong to the Nevanlinna class, so
its zeros satisfied \eqref{fundbla}.

The question arose naturally what one could say about the zeros of a generic function which can grow at the directions toward some selected compact
sets on $\bt$ (we refer to these sets as the {\it sets of singularities}). For example, in \eqref{kilsim} this set is $E=\{\pm1\}$.
The study of such functions and their zero sets was initiated in \cite{bgk1, bgk2} for analytic functions, and in \cite{fg1, fg2} for subharmonic functions on
$\bd$. To remain closer to the main subject of our paper -- functions with two sets of singularities on $\bt$ -- we mention
two results from the preceding papers.

Given a compact set $F\subset\bt$, denote by $\rho_F(w)$ the Euclidian distance from a point $w\in\bc$ to the set $F$.
Recall the following quantitative characteristic of $F$ known as the {\it Ahern--Clark type} \cite{ahcla}
$$ \a(F):=\sup\{\a\in\br: \ m\bigl(\z\in\bt: \rho_F(\z)<x\bigr)=O(x^\a), \quad x\to +0\}, $$
$m(A)$ is the normalized Lebesgue measure of a set $A$.

The first aforementioned result is a particular case of \cite[Theorem 0.3]{bgk2}.

{\bf Theorem A}. {\it Given a compact set $F\subset\bt$, let an analytic function $f$ on $\bd$, $|f(0)|=1$, satisfy the growth condition
\begin{equation*}
\log|f(z)|\le \frac{M}{(1-|z|)^p\,\rho_F^q(z)}\,, \qquad z\in\bd, \quad M,p,q>0.
\end{equation*}
Then for each $\ep>0$ there is a positive number $C=C(F,p,q,\ep)$ so that the Blaschke-type condition holds for the zero set $Z(f)$ of $f$}
\begin{equation*}
\sum_{\z\in Z(f)} (1-|\z|)^{p+1+\ep}\,\rho_F^{(q-\a(F)+\ep)_+}(\z)\le CM, \qquad (x)_+:=\max(x,0).
\end{equation*}

As it was pointed out in \cite{fg1}, the natural setting of the problem in question is the set of subharmonic functions of special growth. The analogue of
the Blaschke condition involves then the Riesz measure (generalized Laplacian) of the corresponding function.

The second result is a particular case $n=2$ of \cite[Theorem 5]{fg2}.

Let $E$ and $F$ be two arbitrary compact sets on $\bt$. We define a class $\css_{p,q}(E,F)$ of subharmonic on $\bd$ functions $v$, which satisfy
\begin{equation}\label{growth}
v(z)\le \frac{M}{\rho_E^p(z)\,\rho_F^q(z)}\,,  \qquad M,p,q>0.
\end{equation}

{\bf Theorem B}. {\it Given two disjoint compact sets $E,F\subset\bt$, let a subharmonic function $v\in\css_{p,q}(E,F)$.
Then for each $\ep>0$ the following Blaschke-type condition holds for the Riesz measure $\mu$ of $v$}
\begin{equation*}
\int_\bd (1-|\l|)\,\rho_E^{(p-\a(E)+\ep)_+}(\l)\,\rho_F^{(q-\a(F)+\ep)_+}(\l)\,\mu(d\l)<\infty.
\end{equation*}

Both the above results actually deal with two sets of singularities, and each case is extreme in a sense. Precisely, such sets are $E=\bt$ and $F$ in Theorem A, and
the {\it disjoint} sets $E$ and $F$ in Theorem B. The goal of this paper is to study the case of two generic compact sets which come up as the sets of
singularities of a subharmonic function $v$ subject to some special growth condition.

\smallskip

We impose certain restrictions on $E$ and $F$ in the form of ``integrability'' of the products
\begin{equation}\label{int3}
\bigl\|\rho_E^{-a}\rho_F^{-b}\bigr\|_1=\int_\bt \frac{m(d\z)}{\rho_E^a(\z)\rho_F^b(\z)}<\infty, \qquad a,b\ge0.
\end{equation}
Here is our main result.
\footnote{The case of more general conditions on a function $v$ and its associated measure was considered in the papers \cite{HA}, \cite{HT}, 
but these conditions do not look as clear as ours.}

\begin{theorem}\label{auxth}
Given two compact sets $E$ and $F$ on $\bt$ subject to $\eqref{int3}$, let a subharmonic function $v$, $v(0)\ge0$,
with the Riesz measure $\mu$, belong to $\css_{p,q}(E,F)$.

\noindent
$(i)$. If both $0\le a<p$ and $0\le b<q$ hold, then
for each $\ep>0$ there is a constant $C=C(p,q,a,b,\ep)$ so that
\begin{equation}\label{bla9}
\int_\bd \rho_E^{p-a+\ep}(\l)\,\rho_F^{q-b+\ep}(\l)(1-|\l|)\,\mu(d\l)\le CM\,\bigl\|\rho_E^{-a}\,\rho_F^{-b}\bigr\|_1.
\end{equation}
$(ii)$. If $0\le a<p$, $b\ge q$ \ $(0\le b<q$, $a\ge p)$, then for each $\ep>0$ there is a constant $C=C(p,q,a,\ep)$ \
$(C=C(p,q,b,\ep))$ so that
\begin{equation}\label{blaii}
\begin{split}
\int_\bd \rho_E^{p-a+\ep}(\l)\,(1-|\l|)\,\mu(d\l) &\le CM\,\bigl\|\rho_E^{-a}\rho_F^{-b}\bigr\|_1, \\
\Bigl( \int_\bd \rho_F^{q-b+\ep}(\l)\,(1-|\l|)\,\mu(d\l) &\le CM\,\bigl\|\rho_E^{-a}\rho_F^{-b}\bigr\|_1. \Bigr)
\end{split}
\end{equation}
\end{theorem}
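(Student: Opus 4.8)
The plan is to reduce the estimate to a statement about the Green potential of the Riesz measure and then control that potential via a harmonic majorant built from the growth bound. First I would invoke the Riesz decomposition on a slightly smaller disk $\bd_r=\{|z|<r\}$, writing $v=u_r-G_{\mu}$, where $G_\mu(z)=\int_{\bd_r}g_r(z,\l)\,\mu(d\l)$ is the Green potential (with $g_r$ the Green function of $\bd_r$) and $u_r$ is harmonic in $\bd_r$. Since $v(0)\ge0$, evaluating at the origin gives $\int_{\bd_r} g_r(0,\l)\,\mu(d\l)\le u_r(0)=\frac1{2\pi}\int_{0}^{2\pi}v(re^{i\th})\,d\th$, so the whole problem is transferred to bounding the boundary integral of $v$ — but of the \emph{positive part}, or rather of $v$ itself, using \eqref{growth}. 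The elementary inequality $g_r(0,\l)\ge c\,(1-|\l|/r)$ for $|\l|\le r/2$ (and a routine tail argument for $|\l|$ near $r$) then turns the left side into $\int_{\bd}(1-|\l|)\,\mu(d\l)$ up to constants, once we let $r\to1$.

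The above naive bound, however, only yields $\int_\bd(1-|\l|)\,\mu(d\l)\le CM\|\rho_E^{-p}\rho_F^{-q}\|_1$, which is not what we want: we need the extra weight $\rho_E^{p-a+\ep}\rho_F^{q-b+\ep}$ on the left and only the weaker norm $\|\rho_E^{-a}\rho_F^{-b}\|_1$ on the right. The standard device (as in \cite{fg1,fg2}) is to apply the above Jensen-type inequality not to $v$ but to $v$ \emph{localized and reweighted}: one covers $\bt$ by a Whitney-type decomposition into arcs $I$ adapted to $E$ and $F$, on each of which $\rho_E$ and $\rho_F$ are essentially constant, equal to their values $\rho_E(I),\rho_F(I)$, and $|I|\asymp\min(\rho_E(I),\rho_F(I),\dots)$ — more precisely one needs a simultaneous Whitney decomposition for the two sets. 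On the Carleson box $Q_I$ over such an arc one has, by \eqref{growth}, $v\le M\rho_E^{-p}(I)\rho_F^{-q}(I)$ up to constants, and applying the Green/Jensen estimate on a disk comparable to $Q_I$ (using that $v(0)\ge0$ controls things, or more carefully passing through a point inside $Q_I$ where $v$ is bounded below, which requires the sub-mean-value property and a pigeonhole choice of scale) gives
\begin{equation*}
\int_{Q_I}(1-|\l|)\,\mu(d\l)\le C\,M\,\rho_E^{-p}(I)\,\rho_F^{-q}(I)\,|I|^2.
\end{equation*}
Multiplying by the weight $\rho_E^{p-a+\ep}(I)\rho_F^{q-b+\ep}(I)$, which is legitimate since this weight is comparable to a constant on $Q_I$, produces on the right $CM\,\rho_E^{-a+\ep}(I)\rho_F^{-b+\ep}(I)|I|^2$, and since $|I|\le C\rho_E(I)$ and $|I|\le C\rho_F(I)$ one can absorb the $\ep$-powers: $\rho_E^{\ep}(I)|I|\le C|I|^{1+\ep/\dots}$, yielding a bound by $CM\,\rho_E^{-a}(I)\rho_F^{-b}(I)|I|\cdot|I|^{\d}$ with $\d>0$. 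Summing over $I$ and comparing $\sum_I\rho_E^{-a}(I)\rho_F^{-b}(I)|I|$ with $\int_\bt\rho_E^{-a}(\z)\rho_F^{-b}(\z)\,m(d\z)=\|\rho_E^{-a}\rho_F^{-b}\|_1$ gives \eqref{bla9}; the surplus factor $|I|^\d\le1$ only helps convergence. Part $(ii)$ is the same argument, except that when $b\ge q$ the power of $\rho_F$ on the left would be nonpositive, so one simply drops it (uses $\rho_F^{q-b+\ep}\le C$ on each $Q_I$ once $\ep$ is small, or rather keeps $\rho_F^{0}$), and the right side still carries $\rho_F^{-b}(I)$; symmetrically for the other case.

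The main obstacle I anticipate is making the simultaneous Whitney decomposition for the two compact sets $E$ and $F$ work cleanly, together with the choice of a ``good'' interior point in each Carleson box at which $v$ is bounded below so that the local Jensen inequality can be applied with $v(0)\ge0$ replaced by control at that point. One must ensure the local side lengths are comparable to the \emph{smaller} of the two distances and that the boxes have bounded overlap; a technical point is that $\rho_E$ and $\rho_F$ need not be comparable to each other on a given box, so the decomposition must refine whichever of the two sets is ``closer.'' A secondary nuisance is the passage $r\to1$ and the tail estimate for $g_r(z,\l)$ when $\l$ is near the boundary of $\bd_r$, but that is standard. Once the covering is set up, the arithmetic with the exponents $p,q,a,b,\ep$ is routine, and the hypotheses $a<p$, $b<q$ (resp.\ $b\ge q$) enter exactly to guarantee the left-hand weights have nonnegative — or, when we drop them, the right-hand ones retain enough negative power to reconstruct the norm \eqref{int3}.
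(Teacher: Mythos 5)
Your strategy (a simultaneous Whitney decomposition of $\bt$ adapted to $E$ and $F$, local Jensen estimates on Carleson boxes, then summation) is genuinely different from the paper's, but it founders on the key local estimate
\begin{equation*}
\int_{Q_I}(1-|\l|)\,\mu(d\l)\le C\,M\,\rho_E^{-p}(I)\,\rho_F^{-q}(I)\,|I|^{2},
\end{equation*}
which is not only unproved but false as stated. The Riesz mass of $v$ in a box touching $\bt$ is \emph{not} controlled by the supremum of $v$ over (a dilate of) that box: take $v=\log|B|$ for a Blaschke product $B$ whose zeros $\l_n$ all lie in $Q_I$ with $\sum(1-|\l_n|)=S$; then $v\le0$ everywhere, so the right-hand side is a fixed finite number, while the left-hand side is comparable to $S$ and can be made arbitrarily large. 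What a local Jensen inequality on a disk $B(z_0,2r)\subset\bd$ actually gives is $\mu(B(z_0,r))\le C\bigl(\sup_{B(z_0,2r)}v - v(z_0)\bigr)$, and the term $-v(z_0)$ cannot be bounded locally: in the example above $v\le -cS/|I|$ on the entire top half of $Q_I$, so no ``good'' interior point can be produced by a pigeonhole or sub-mean-value argument. Controlling $\sum_I(-v(z_I))\,|I|\cdot(\text{weight})$ requires precisely the global input ($v(0)\ge0$ together with a harmonic majorant on a suitable domain) that the local scheme was meant to bypass. A second warning sign is that $\ep$ plays no essential role in your accounting (``the surplus factor $|I|^\d\le1$ only helps convergence''), whereas in the paper's proof $\ep>0$ is exactly what makes the final integral $\int_0^2\xi^{\ep-1}\,d\xi$ converge; a proof of this theorem in which $\ep$ is decorative should be distrusted. (Also, your opening reduction via the Riesz decomposition on $\{|z|<r\}$ with $r\to1$ presupposes a harmonic majorant on all of $\bd$, i.e.\ $\|\rho_E^{-p}\rho_F^{-q}\|_1<\infty$, which is the hypothesis of Theorem \ref{th1} and is \emph{not} assumed in Theorem \ref{auxth}.)

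The paper's route keeps everything global: for each pair of scales $(t,s)$ it forms the connected component $\oo_{t,s}\ni0$ of $\{\rho_E>t,\ \rho_F>s\}$, on which \eqref{growth} together with the Poisson majorant $P_{a,b}$ of $\rho_E^{-a}\rho_F^{-b}$ gives a harmonic majorant of size $Mt^{-(p-a)}s^{-(q-b)}\|\rho_E^{-a}\rho_F^{-b}\|_1$ at the origin; the Riesz representation evaluated at $z=0$ plus the lower bound $G_{t,s}(0,\l)\ge(1-|\l|)/2$ on a smaller component (Proposition \ref{pr4}) yields $\int_{D_{\xi,\eta}(E,F)}(1-|\l|)\,\mu(d\l)\le CM\xi^{-(p-a)}\eta^{-(q-b)}\|\rho_E^{-a}\rho_F^{-b}\|_1$, and the two-dimensional layer-cake representation (Proposition \ref{pr5}) integrates this family of bounds over the scales. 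The integration over scales is the correct replacement for your sum over Whitney boxes, because the estimate at each scale already incorporates the normalization $v(0)\ge0$ at a single fixed point. If you want to salvage a decomposition-style argument, you would have to rebuild this global mechanism inside it, at which point it coincides with the paper's proof.
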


\smallskip

The procedure we suggest for solving the problem under consideration is pursued in three steps.

Step 1. Given a function $v\in\css_{p,q}(E,F)$, we find a domain $\oo\subset\bd$ so that $v$ has a harmonic majorant, i.e.,
the harmonic function $U$ exists with $v\le U$ on $\oo$. By the Riesz representation, see, e.g., \cite[Theorem 4.5.4]{Ran},
which will feature prominently in what follows,
\begin{equation}\label{subrepr}
v(z)=u(z)-\int_\oo G_\oo(z,\l)\,\mu(d\l), \qquad z\in\oo.
\end{equation}
Here $u$ is the least harmonic majorant for $v$, $\mu$ the Riesz measure of $v$, $G_\oo$ the Green's function for $\oo$
$$ G_\oo(z,\l):=\log\frac1{|z-\l|}-h_\oo(z,\l), \qquad z,\l\in\oo, $$
$h_\oo$ is the solution to the Dirichlet problem on $\oo$ for the boundary value
$$ h_\oo(z,\xi)=\log\frac1{|z-\xi|}\,, \qquad \xi\in\partial\oo. $$
If $\oo$ contains the origin, and $v(0)\ge0$, we have from \eqref{subrepr} with $z=0$
\begin{equation}\label{blash1}
\int_\oo G_\oo(0,\l)\,\mu(d\l)\le u(0)\le U(0).
\end{equation}

Step 2. We apply the lower bound for the Green's function of the type
$$ G_\oo(0,\l)\ge c(1-|\l|), \qquad \l\in\oo'\subset\oo $$
to obtain
$$ \int_{\oo'} (1-|\l|)\,\mu(d\l)\le U(0). $$

Step 3. To go over to the integration over the whole unit disk, we invoke a new two-dimensional version of the well-known
``layer cake representation'' (LCR) theorem, see Proposition \ref{pr5}.

In the simplest case when $\oo=\bd$ (see Theorem \ref{th1} below) the Green's function~is
$$ G_{\bd}(z,\l)=\log\left|\frac{1-\bar\l z}{z-\l}\right|, $$
so we come to the Blaschke condition for $\mu$ of the form
\begin{equation}\label{blashdisk}
\int_\bd (1-|\l|)\,\mu(d\l)\le\int_\bd \log\frac1{|\l|}\,\mu(d\l)\le U(0)
\end{equation}
in one step.

\smallskip

We proceed as follows. In Section \ref{s1} we gather a collection of auxiliary facts on the harmonic measure and majorants,
the bounds from below for the Green's function and LCR theorems. The main result is proved in Section~\ref{s2}. We also demonstrate its
optimal character in Theorem \ref{inverse}.

\section{Preliminaries}
\label{s1}

\subsection{Bounds for the harmonic measure}
Let $\g=[e^{i\th_1}, e^{i\th_2}]$ be a closed arc on the unit circle $\bt$. For the harmonic measure of this
arc with respect to the unit disk $\bd$ the explicit expression is known \cite[p. 26]{Ga-Ma}
\begin{equation*}
\o(\l,\g;\bd)=\frac{2\a-(\th_2-\th_1)}{2\pi}\,, \qquad \l\in\bd,
\end{equation*}
where $\a$ is the angle subtended at $\l$ by the arc $\g$.

Let $\z'\in\bt$, and $0<t<1$. We put
\begin{equation}\label{arcs}
\begin{split}
\g &=\g_t(\z'):=\{\z\in\bt: \ |\z-\z'|\le t\}, \\
\gg &=\gg_t(\z'):=\{z\in\bd: \ |z-\z'|=t\}.
\end{split}
\end{equation}
It is clear, that $\o$ is constant on $\gg$ (it is constant on each arc of a circle that passes through the endpoints of $\g$).
An elementary geometry provides the formula
$$ \o(\l,\g_t(\z'); \bd)=\frac12-\frac1{\pi}\,\arcsin\frac{t}2\,, \qquad \l\in\gg_t(\z'). $$
So, there is a uniform bound from below for the harmonic measure of $\g$ on~$\gg$
\begin{equation}\label{belarc}
\o(\l,\g_t(\z'); \bd)\ge \frac13\,, \qquad \l\in\gg_t(\z').
\end{equation}

To proceed further, given a compact set $K\subset\bt$, denote by
$$ \rho(w)=\rho_K(w):={\rm dist}(w,K), \quad w\in\bc, $$
the Euclidian distance from $w$ to $K$. Consider the sets on the unit circle
\begin{equation}\label{neib}
K_t :=\{\z\in\bt: \ \rho_K(\z)\le t\}, \quad K_t':=\{\z\in\bt: \ \rho_K(\z)\ge t\}=\ovl{\bt\backslash K_t},
\end{equation}
and the set in $\bd$
\begin{equation}\label{niebdis}
\gg_t(K):=\{z\in\bd: \ \rho_K(z)=t\}.
\end{equation}
Note that $K_t$ and $K_t'$ are finite unions of disjoint closed arcs.

For each $\l\in\gg_t(K)$ there is $\z'\in K$, such that $|\l-\z'|=\rho_K(\l)=t$, so $\l\in\gg_t(\z')$. If follows from relation \eqref{belarc} that
$$ \o(\l,\g_t(\z'); \bd)\ge \frac13, \qquad \l\in\gg_t(K). $$
But, by definition, $K_t\supset\g_t(\z')$ for each $\z'\in K$, so monotonicity of the harmonic measure yields
$$ \o(\l,K_t; \bd)\ge\o(\l,\g_t(\z'); \bd). $$

\begin{proposition}\label{pr1}
Given a compact set $K\subset\bt$, let $K_t$ be its closed neighborhood $\eqref{neib}$. Then
\begin{equation}\label{belgen}
\o(\l,K_t; \bd)\ge\frac13, \qquad \l\in\gg_t(K).
\end{equation}
\end{proposition}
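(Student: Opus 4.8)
The plan is to assemble the pieces already laid out in the text into a short chain of inequalities and monotonicity arguments. The statement to prove is Proposition~\ref{pr1}: for a compact $K\subset\bt$ and $\l\in\gg_t(K)$, one has $\o(\l,K_t;\bd)\ge 1/3$. First I would fix an arbitrary point $\l\in\gg_t(K)$, so that by \eqref{niebdis} we have $\rho_K(\l)=t$. Since $K$ is compact and $w\mapsto|\l-w|$ is continuous, the distance is attained: there exists $\z'\in K$ with $|\l-\z'|=\rho_K(\l)=t$. By the definition \eqref{arcs} of $\gg_t(\z')$ this means $\l\in\gg_t(\z')$, the circular arc through the endpoints of $\g_t(\z')$ on which the harmonic measure of $\g_t(\z')$ is constant.

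Next I would invoke the uniform lower bound \eqref{belarc} already derived from the elementary-geometry formula for $\o(\cdot,\g_t(\z');\bd)$ on $\gg_t(\z')$, namely
\begin{equation*}
\o(\l,\g_t(\z');\bd)\ge\frac13.
\end{equation*}
It then remains to pass from the single-point neighborhood $\g_t(\z')$ to the full $t$-neighborhood $K_t$ of $K$. Because $\z'\in K$, every $\z\in\bt$ with $|\z-\z'|\le t$ satisfies $\rho_K(\z)\le|\z-\z'|\le t$, hence $\g_t(\z')\subset K_t$ by \eqref{neib}. Monotonicity of the harmonic measure in its second argument (a larger boundary set receives no smaller harmonic measure) gives
\begin{equation*}
\o(\l,K_t;\bd)\ge\o(\l,\g_t(\z');\bd)\ge\frac13,
\end{equation*}
which is precisely \eqref{belgen}. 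Since $\l\in\gg_t(K)$ was arbitrary, the proposition follows.

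There is really no substantial obstacle here: the proof is a bookkeeping exercise that glues together the explicit arc computation, the attainment of the distance by compactness, the set inclusion $\g_t(\z')\subset K_t$, and monotonicity of harmonic measure. The only point deserving a word of care is the choice of $\z'$ — one must note that the minimizing $\z'$ lies \emph{on} $\bt$ (it does, since $K\subset\bt$) so that $\g_t(\z')$ is genuinely an arc of $\bt$ and the earlier formula applies; and that the bound \eqref{belarc} holds uniformly over $\gg_t(\z')$, so it is legitimate to evaluate it at our particular $\l$. Everything else is immediate from the material already assembled in this subsection, so the argument is short.
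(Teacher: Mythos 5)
Your proposal is correct and follows exactly the argument the paper gives just before the statement of Proposition \ref{pr1}: choose $\z'\in K$ attaining $\rho_K(\l)=t$, apply the uniform bound \eqref{belarc} on $\gg_t(\z')$, and conclude by the inclusion $\g_t(\z')\subset K_t$ together with monotonicity of harmonic measure. No differences worth noting.
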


\smallskip

Let us now turn to the upper bounds for the harmonic measure of $K_t$.
For a compact set $K$ on $\bt$ and $0<t<1$, the open set
\begin{equation}\label{set1}
D_{t}(K):=\{w\in\bd: \ \rho_K(w)>~t\}
\end{equation}
can be disconnected even for simple $K$. We denote by $\oo_t(K)$ the connected component of $D_t(K)$ that contains the origin.
Clearly, $\oo_t(K)=\emptyset$ for $t\ge1$.

In view of connectedness, it is easy to verify that $\oo_{t}(K)\supset\oo_{\tau}(K)$ for $\tau>t$. It is also important that
\begin{equation}\label{bound}
\partial\oo_t(K)\subset \partial D_t(K)=\gg_t(K)\cup K_t'.
\end{equation}

The following result will be helpful later on.

\begin{proposition}\label{pr2}
Given a compact set $K\subset\bt$, and $s>0$, one has
\begin{equation}\label{inclus}
\{w\in\bd: \ \rho_K(w)>s\}\subset\oo_{s/2}(K).
\end{equation}
\end{proposition}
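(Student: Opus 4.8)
The plan is to show that any point $w\in\bd$ with $\rho_K(w)>s$ can be joined to the origin by a path inside $D_{s/2}(K)$, which by the definition of $\oo_{s/2}(K)$ as the connected component of $D_{s/2}(K)$ containing $0$ will place $w$ in $\oo_{s/2}(K)$. The natural candidate for such a path is the radial segment $[0,w]$, i.e. the points $rw$ for $r\in[0,1]$. First I would note that $0\in D_{s/2}(K)$ always holds, since $K\subset\bt$ gives $\rho_K(0)=1>s/2$ (the inclusion is trivially true when $s\ge 2$, as then the left-hand set is empty, so we may assume $0<s<2$ and hence $s/2<1$).

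Next I would estimate $\rho_K(rw)$ for $r\in[0,1]$. Pick $\z\in K$ with $|w-\z|=\rho_K(w)$. A direct computation gives
\begin{equation*}
|rw-\z|\ge |w-\z|-(1-r)|w|\ge \rho_K(w)-(1-r),
\end{equation*}
using $|w|<1$. This is useful when $r$ is close to $1$: it shows $\rho_K(rw)\ge |rw-\z|$... wait, that is the wrong direction. Instead I would use that for \emph{any} $\xi\in K$,
\begin{equation*}
|rw-\xi|\ge |w-\xi|-(1-r)|w| > |w-\xi|-(1-r)\ge \rho_K(w)-(1-r)>s-(1-r),
\end{equation*}
and taking the infimum over $\xi\in K$ yields $\rho_K(rw)\ge s-(1-r)$. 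For $r\ge 1-s/2$ this gives $\rho_K(rw)\ge s/2>s/2$ with the needed strict inequality requiring a touch more care — more simply, for $r> 1-s/2$ we get $\rho_K(rw)>s/2$, so the truncated segment $\{rw: 1-s/2<r\le 1\}$ lies in $D_{s/2}(K)$.

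It remains to connect the initial portion of the segment, or rather to observe that the point $r_0 w$ with $r_0=1-s/2$ satisfies $|r_0 w|\le r_0=1-s/2<1$, and then $\rho_K(r_0 w)\ge 1-|r_0 w|\ge s/2$; again for the strict inequality one uses $|r_0 w|<1-s/2$ when $|w|<1$ strictly, or simply replaces $s/2$ by a slightly smaller parameter — but cleanest is: since $\rho_K$ is continuous and $D_{s/2}(K)$ is open, and since $\rho_K(0)=1>s/2$ while $\rho_K(rw)>s/2$ for all $r$ in a neighborhood of $1$ by the estimate above, one checks that in fact $\rho_K(rw)\ge\min(s-(1-r),\,1-r|w|)$ and both branches exceed $s/2$ on all of $[0,1]$ once $0<s<2$, because $1-r|w|\ge 1-r$ and the function $r\mapsto\max(s-(1-r),1-r)$ on $[0,1]$ has minimum $s/2$ attained at $r=1-s/2$, with strictness coming from $|w|<1$. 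Hence the whole segment $[0,w]$ lies in $D_{s/2}(K)$, it is connected and contains $0$, so $w\in\oo_{s/2}(K)$.

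The main (and only mildly delicate) obstacle is bookkeeping the strict versus non-strict inequalities at the junction point $r=1-s/2$: the definition \eqref{set1} uses the open condition $\rho_K(w)>t$, so one must ensure $\rho_K(rw)$ stays \emph{strictly} above $s/2$ along the segment. This is handled by exploiting $|w|<1$ strictly (so $1-r|w|>1-r$), which gives the needed slack; no genuine geometric difficulty arises, and the triangle inequality estimate above is the whole content.
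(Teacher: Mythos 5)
Your proof is correct and follows essentially the same route as the paper: show that the radial segment $[0,w]$ stays in $D_{s/2}(K)$ via the triangle inequality together with $\rho_K(\cdot)\ge 1-|\cdot|$. The paper packages your two-regime case analysis into the single inequality $\rho_K(z)\le\rho_K(rz)+(1-|rz|)\le 2\rho_K(rz)$, which yields $\rho_K(rw)>s/2$ for all $r\in[0,1]$ at once and avoids the strict-inequality bookkeeping at $r=1-s/2$ (where, incidentally, your displayed ``$\min$'' should be ``$\max$'').
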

\begin{proof}
Clearly,
$$ \{w\in\bd: \ \rho_K(w)>s\}\subset\lt w\in\bd: \ \rho_K(w)>\frac{s}2\rt, $$
and we wish to show that the set on the left side is actually a subset of the connected component of the set on the right side
that contains the origin. The argument relies on a simple inequality, which we apply repeatedly throughout the paper
\begin{equation}\label{ineq}
\rho_K(z)\le 2\rho_K(rz), \qquad z\in\ovl\bd, \quad 0\le r\le1.
\end{equation}
Indeed, by the triangle inequality $\rho_K(z)\le \rho_K(rz)+|rz-z|$, and so
$$ \rho_K(z)\le \rho_K(rz)+(1-|rz|)\le 2\rho_K(rz), $$
as claimed.

It follows from \eqref{ineq} that $\rho_K(rz)>s/2$ for all $0\le r\le1$ as soon as $\rho_K(z)>s$.
In other words, the whole closed interval
$$ [0,z]\subset \lt\rho_K(w)>s/2\rt, $$
and so $z\in\oo_{s/2}(K)$, as needed.
\end{proof}

\begin{proposition}\label{ubound}
Given a number $l\in (0,1)$, put $k:=\pi l^{-1}+1$. Then the following inequality holds for $t<k^{-1}$
\begin{equation}\label{ubouhm}
\o(\l,K_t;\bd)\le \frac{l}{t}\,(1-|\l|), \qquad \l\in\oo_{kt}(K).
\end{equation}
\end{proposition}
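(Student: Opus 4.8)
The plan is to estimate the harmonic measure $\o(\l,K_t;\bd)$ from above by comparing it with the harmonic measure of a larger, more tractable set, and then exploit the fact that $\l$ sits deep inside the component $\oo_{kt}(K)$, i.e.\ at distance more than $kt$ from $K$. First I would recall that $K_t$ is a finite union of disjoint closed arcs, say $K_t=\bigcup_j \d_j$, and that by subadditivity and monotonicity of the harmonic measure it suffices to control $\o(\l,\d;\bd)$ for a single closed arc $\d=[e^{i\th_1},e^{i\th_2}]$ of arclength $|\d|\le 2t$ (the Euclidean $t$-neighborhood of a point of $K$ on $\bt$ has angular size comparable to $2t$), while remembering that the number of such arcs is bounded in a way that gets absorbed — or, better, avoid counting arcs altogether by a direct geometric argument on the single arc nearest to $\l$.

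The key geometric step: fix $\l\in\oo_{kt}(K)$, so $\rho_K(\l)>kt$. For any arc $\d_j\subset K_t$, every point $\z\in\d_j$ satisfies $\rho_K(\z)\le t$, hence $|\l-\z|\ge \rho_K(\l)-t > (k-1)t = \pi l^{-1}t$. Using the explicit formula $\o(\l,\d_j;\bd)=\bigl(2\a_j-|\d_j|\bigr)/(2\pi)$, where $\a_j$ is the angle subtended at $\l$ by $\d_j$, and the elementary bound $\a_j\le |\d_j|\cdot\sup_{\z\in\d_j}|\l-\z|^{-1}\cdot C$ (the subtended angle is at most the arclength divided by the distance, up to a constant), I get $\o(\l,\d_j;\bd)\le C'|\d_j|/\rho_K(\l)$. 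Summing over $j$ and using $\sum_j|\d_j|=|K_t|\le 2\pi$ (total circle) is too crude; instead I would keep the distance from $\l$ to \emph{each} arc and note that distant arcs contribute even less, so in fact $\sum_j \o(\l,\d_j;\bd)\le C''\,t^{-1}$ times nothing useful unless I also bring in the factor $(1-|\l|)$. The right way is: the harmonic measure of \emph{all} of $\bt$ at $\l$ is $1$, and $K_t$ occupies a portion of $\bt$ at Euclidean distance $\ge (k-1)t$ from $\l$; a standard estimate (e.g.\ via the Poisson kernel $P(\l,\z)=(1-|\l|^2)/|\l-\z|^2$) gives
\begin{equation*}
\o(\l,K_t;\bd)=\int_{K_t}P(\l,\z)\,m(d\z)\le \frac{1-|\l|^2}{((k-1)t)^2}\cdot m(\bt)\le \frac{2(1-|\l|)}{(k-1)^2 t^2}\cdot\frac{1}{2\pi}\cdot 2\pi,
\end{equation*}
wait — that produces a $t^{-2}$, not $t^{-1}$; so I would instead integrate only over $K_t$ and bound $\int_{K_t}|\l-\z|^{-2}m(d\z)$ by splitting $\bt$ into dyadic annuli around the foot of $\l$ and using that $m(K_t\cap\{|\l-\z|\sim 2^j(k-1)t\})\lesssim \min(2^j t, 1)$, which telescopes to give $\int_{K_t}|\l-\z|^{-2}m(d\z)\lesssim t^{-1}\cdot((k-1)t)^{-1}\cdot t = t^{-1}$ after carefully tracking the $k$-dependence; combined with the $(1-|\l|)$ from the numerator this yields exactly the claimed $\o(\l,K_t;\bd)\le l\,t^{-1}(1-|\l|)$ once the constant $k=\pi l^{-1}+1$ is tuned to make $(k-1)=\pi l^{-1}$ large enough.

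The main obstacle, and the place where care is needed, is getting the \emph{sharp} power $t^{-1}$ (not $t^{-2}$) with the \emph{explicit} constant $l$ rather than an unspecified $C(l)$: the naive Poisson-kernel bound loses a factor of $t$, so one must exploit that $K_t$ is contained in a $t$-neighborhood on $\bt$ — that is, $K_t$ itself is "thin" — and not merely that it is far from $\l$. The cleanest route is probably to dominate $\o(\l,K_t;\bd)\le \o(\l,\{\z:\rho_K(\z)<t, |\arg(\z/\l)|<\pi\};\bd)$ and split according to whether $\z$ is in the arc of $K_t$ closest to $\l$ or farther; for the closest arc of angular length $\lesssim t$ at distance $\gtrsim (k-1)t$ the Poisson integral is $\lesssim (1-|\l|)\cdot t\cdot ((k-1)t)^{-2}=(1-|\l|)((k-1)^2 t)^{-1}$, and the geometric decay over farther arcs sums to the same order; choosing $(k-1)=\pi l^{-1}$ (so $(k-1)^{-2}\cdot(\text{bounded sum})\le l/\pi\cdot\text{something}\le l$) closes the argument for $t<k^{-1}$, the restriction $t<k^{-1}$ being exactly what guarantees $\oo_{kt}(K)\neq\emptyset$ and that the foot of $\l$ on $\bt$ is well inside the complement of $K_t$.
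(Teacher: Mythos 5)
Your final line of attack---the Poisson integral representation, the lower bound $|\l-\z|\ge\rho_K(\l)-t>(k-1)t$ for $\z\in K_t$, and integrating the tail of the kernel to extract a single factor $1/((k-1)t)$---is essentially the paper's own proof, and the dyadic-annuli computation you sketch does yield the correct power $t^{-1}$. However, the diagnosis you give of \emph{why} it works is wrong, and this matters. You claim one ``must exploit that $K_t$ is thin.'' It is not: $K_t$ is the $t$-neighborhood of an arbitrary compact $K\subset\bt$ and can have measure bounded below uniformly in $t$ (take $K=\bt$, so that $K_t=\bt$ for every $t$). What actually saves the estimate is the decay of the Poisson kernel alone: $\int_{\{\z\in\bt:\,|\l-\z|\ge (k-1)t\}}|\l-\z|^{-2}\,m(d\z)=O\bigl(((k-1)t)^{-1}\bigr)$, because the portion of $\bt$ at distance comparable to $r$ from $\l$ has measure $O(r)$---a property of the circle, not of $K_t$. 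Your bound $m(\{\z:|\l-\z|\sim 2^j(k-1)t\})\lesssim 2^j(k-1)t$ is exactly this fact; the paper obtains the same thing in one stroke by passing to the angular variable and computing $\int_{k_1t}^{\pi}x^{-2}\,dx$ with $k_1=k-1-l^{-1}$. So the first half of your final paragraph, where you insist on the thinness of $K_t$ and on treating ``the closest arc'' separately, is a red herring and should be deleted, not repaired.

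There are also two genuine gaps. First, the comparison $m(\{\z\in\bt:|\l-\z|\le r\})\lesssim r$ (equivalently, $|\l-\z|\gtrsim|\arg\z-\arg\l|$) degenerates when $\l$ is far from $\bt$: the angular width of that set is controlled by $r+(1-|\l|)$, and the extra term is harmless only if $1-|\l|\lesssim t$. You must therefore first dispose of the trivial case $1-|\l|>t/l$, where \eqref{ubouhm} holds simply because $\o\le1$; the paper does exactly this at the outset, and only afterwards is the inequality $1-|\l|\le t/l\le(k-1)t$ available to absorb the correction. Second, the proposition asserts the \emph{explicit} constant $l$, not some $C(l)$, and your constant-chase (``$(k-1)^{-2}\cdot(\text{bounded sum})\le\dots\le l$'') is not carried out; with $k-1=\pi l^{-1}$ the tail integral gives $(1-|\l|^2)\cdot O\bigl(((k-1)t)^{-1}\bigr)=O(l/\pi)\cdot t^{-1}(1-|\l|)$, so the numerical factor does come out below $1$, but this has to be verified, as the paper does by an elementary explicit computation. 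With the spurious ``thinness'' discussion removed, the trivial case added, and the constants tracked, your argument becomes the paper's proof.
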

\begin{proof}
If $1-|\l|>t l^{-1}$, inequality \eqref{ubouhm} obviously holds. So we assume in what follows that
\begin{equation}\label{boun2}
1-|\l|\le\frac{t}{l}\,, \qquad |\l|\ge 1-\frac{t}{l}>1-\frac1{kl}.
\end{equation}

For $\l=|\l|e^{\i\th}\in\oo_{kt}(K)$, and $\z=e^{i\p}\in K_t$, the Poisson integral representation for the harmonic measure reads
\begin{equation*}
\o(\l,K_t;\bd)=\int\limits_{K_t} \frac{1-|\l|^2}{|\z-\l|^2}\,m(d\z)=
\frac{(1-|\l|^2)}{2\pi}\,\int\limits_{e^{i\p}\in K_t} \frac{d\p}{(1-|\l|)^2+4|\l|\sin^2\frac{\p-\th}2}\,.
\end{equation*}
Take $\z_1\in K$ such that $\rho_K(e^{i\th}) =|e^{i\th}-\z_1|$. Then, in view of \eqref{boun2},
\begin{equation}\label{boun4}
\begin{split}
\rho_K(e^{i\th}) &=|e^{i\th}-\z_1|=|\l-\z_1+e^{i\th}-\l| \\
&\ge \rho_K(\l)-(1-|\l|)>kt-(1-|\l|).
\end{split}
\end{equation}
Take $\z_2\in K$ such that $\rho_K(e^{i\p})=|e^{i\p}-\z_2|\le t$, so, by \eqref{boun4},
\begin{equation*}
\begin{split}
|\th-\p| &\ge \bigl|e^{i\th}-e^{i\p}\bigr|=\bigl|e^{i\th}-\z_2+\z_2-e^{i\p}\bigr| \\
&\ge\rho_K(e^{i\th})-t>(k-1)t-(1-|\l|).
\end{split}
\end{equation*}
Hence \eqref{boun2} implies
$$ \pi\ge |\th-\p|\ge (k-1)t-\frac{t}{l}=k_1t, \quad k_1:=k-1-\frac1{l}. $$
Going back to the Poisson integral, we see that
$$ \o(\l,K_t;\bd)\le \frac{1-|\l|}{4\pi |\l|}\,\int\limits_{k_1t\le|\p-\th|\le\pi}\,\frac{d\p}{\sin^2\frac{\p-\th}2}
\le \frac{\pi(1-|\l|)}{4|\l|}\,\int\limits_{k_1t\le|\p-\th|\le\pi}\,\frac{d\p}{(\p-\th)^2}\,, $$
or, in view of \eqref{boun2},
$$ \o(\l,K_t;\bd)\le \frac{\pi(1-|\l|)}{4}\,\frac{kl}{kl-1}\,\int_{k_1t}^\pi \frac{dx}{x^2}
\le \frac{\pi(1-|\l|)}{4t}\,\frac{kl}{kl-1}\,\frac1{k-1-l^{-1}}\,. $$
An elementary calculation shows that
$$ \frac{\pi}{4}\,\frac{kl}{kl-1}\,\frac1{k-1-l^{-1}}<l, $$
and \eqref{ubouhm} follows.
\end{proof}

\subsection{Lower bounds for Green's functions}
Under a {\it Green's function} of the domain $\oo_t(K)$ with singularity $z$ we mean a nonnegative function of the form
\begin{equation}\label{green}
G_t(z,\l)=G_{\oo_t(K)}(z,\l):=\log\frac1{|z-\l|}-h_t(z,\l), \qquad z,\l\in\oo_t(K),
\end{equation}
where $h_t$ is the solution to the Dirichlet problem on $\oo_t(K)$ for the boundary value
\begin{equation}\label{bounddir}
h_t(z,\xi)=\log\frac1{|z-\xi|}\,, \qquad \xi\in\partial\oo_t(K).
\end{equation}
Such function exists and is unique, as the boundary $\partial\oo_t(K)$ is a non-polar set, see, e.g., \cite{Ran}.
The problem we address here is to obtain a lower bound for $G_t(0,\cdot)$ in a smaller domain $\oo_\t(K)$
with an appropriate $\t>t$.

\begin{proposition}\label{pr3}
The Green's function $G_t(0,\cdot)$ for the domain $\oo_t(K)$ with singularity at the origin and $0<t<(12\pi+1)^{-1}$ admits the lower bound
\begin{equation}\label{greenboun}
G_t(0,\l) \ge\frac{1-|\l|}2, \qquad \l\in\oo_{(12\pi+1)t}(K).
\end{equation}
\end{proposition}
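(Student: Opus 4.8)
The strategy is to reduce the lower bound for $G_t(0,\cdot)$ on the smaller domain $\oo_{(12\pi+1)t}(K)$ to the two ingredients already prepared in Section~\ref{s1}: the lower bound \eqref{belgen} for the harmonic measure of $K_t$ on the level set $\gg_t(K)$, and the upper bound \eqref{ubouhm} from Proposition~\ref{ubound}. First I would fix $\l\in\oo_{(12\pi+1)t}(K)$ and observe that, by the general theory on the non-polar domain $\oo_t(K)$, the function $G_t(0,\cdot)$ is nonnegative, harmonic on $\oo_t(K)\setminus\{0\}$, and vanishes quasi-everywhere on $\partial\oo_t(K)$. The plan is to bound it below by a comparison harmonic function built from the harmonic measure of a suitable subset of the boundary.

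\smallskip

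\textbf{Step 1: a subdomain on which to compare.} With $l$ chosen so that $k=\pi l^{-1}+1=12\pi+1$, i.e. $l=\tfrac1{12}$, Proposition~\ref{ubound} gives $\o(z,K_t;\bd)\le \tfrac{l}{t}(1-|z|)$ for $z\in\oo_{kt}(K)$. I would work inside the domain $\oo_t(K)$ and split its boundary, using \eqref{bound}, into the ``inner'' part $\gg_t(K)$ and the arcs $K_t'\subset\bt$. The key point is that the harmonic function $w(z):=\log\frac1{|z|}-G_t(0,z)$ equals $\log\frac1{|z|}\le$ something on $\partial\oo_t(K)$; more directly, consider $\log\frac1{|z|}$ itself, which on $\gg_t(K)$ is at least $\tfrac12(1-|z|)$ when $t$ (hence $1-|z|\ge 1-|\z'|$... ) is small, and compare.

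\smallskip

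\textbf{Step 2: the comparison inequality.} Concretely I would prove that on the smaller set $\oo_{kt}(K)$,
\begin{equation*}
G_t(0,\l)\ \ge\ \log\frac1{|\l|}\ -\ c\,\o(\l,K_t;\bd)
\end{equation*}
for a suitable absolute constant $c$, by checking that the right-hand side is a subharmonic minorant of $G_t(0,\cdot)$ on $\oo_t(K)$: on $\gg_t(K)\subset\partial\oo_t(K)$ one has $G_t(0,\cdot)=0$ while $\log\frac1{|z|}$ is controlled from above on $\gg_t(K)$ by $c\,\o(z,K_t;\bd)\ge \tfrac{c}{3}$ using \eqref{belgen}; on $K_t'$ one has $\log\frac1{|z|}=0$; and the singularities at the origin match. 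Then on $\oo_{kt}(K)$ I substitute the Poisson-type upper bound \eqref{ubouhm}, $\o(\l,K_t;\bd)\le \tfrac{l}{t}(1-|\l|)$, together with the elementary estimate $\log\frac1{|\l|}\ge 1-|\l|$, to get $G_t(0,\l)\ge (1-cl/t\cdot\text{stuff})(1-|\l|)$; with $l=\tfrac1{12}$ and the correct bookkeeping of the constant from \eqref{belgen} this produces the factor $\tfrac12$.

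\smallskip

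\textbf{Main obstacle.} The delicate point is the passage from $\oo_t(K)$ (where the Green's function and harmonic measure live) to the strictly smaller $\oo_{(12\pi+1)t}(K)$, and the precise matching of constants: one must verify that the comparison function is genuinely a subharmonic minorant on all of $\oo_t(K)$ — in particular that $\log\frac1{|z|}$ does not exceed $c\,\o(z,K_t;\bd)$ on the whole inner boundary $\gg_t(K)$, not just at one point — which requires that $1-|z|$ be comparable to a constant times $\o$ there, i.e. essentially the uniform lower bound \eqref{belgen}. Once the comparison is set up correctly on $\oo_t(K)$, restricting to $\oo_{kt}(K)$ and plugging in \eqref{ubouhm} is routine; getting the numerical constant exactly $\tfrac12$ (hence $k=12\pi+1$) is a short elementary computation of the type already seen in the proof of Proposition~\ref{ubound}.
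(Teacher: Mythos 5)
Your proposal is correct and follows essentially the same route as the paper: bound $h_t(0,\cdot)=\log\frac1{|\cdot|}-G_t(0,\cdot)$ by a multiple of $\o(\cdot,K_t;\bd)$ via the Maximum Principle on $\oo_t(K)$, using $\log\frac1{|\xi|}\le 2t$ on $\gg_t(K)$ together with the lower bound $\eqref{belgen}$, and then insert $\eqref{ubouhm}$ with $l=\tfrac1{12}$ on the smaller domain. The only point to fix in the write-up is that the comparison constant is not absolute but proportional to $t$ (the paper takes $c=6t$, giving $h_t(0,\l)\le 6t\,\o(\l,K_t;\bd)\le\tfrac12(1-|\l|)$), after which $G_t(0,\l)\ge\log\frac1{|\l|}-\tfrac12(1-|\l|)\ge\tfrac12(1-|\l|)$ as you indicate.
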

\begin{proof}
Since
$$ 1-|\xi|\le\rho_K(\xi)=t, \quad |\xi|\ge 1-t>\frac12, \qquad \xi\in\gg_t(K), $$
one has
$$ h_t(0,\xi)=\log\frac1{|\xi|}\le\log\frac1{1-t}\le 2t, \quad \xi\in\partial\oo_t(K)\cap\gg_t(K). $$
Next, $h_t(0,\xi)=0$ for $\xi\in\partial\oo_t(K)\cap K_t'$, so, by Proposition \ref{pr1} and the Maximum Principle,
$$ h_t(0,\l)\le 6t\o(\l,K_t;\bd), \qquad \l\in\oo_t(K). $$
Now, the upper bound \eqref{ubouhm} with $l=1/12$ and $k=12\pi+1$  yields
$$ h_t(0,\l)\le \frac{1-|\l|}2, \qquad \l\in\oo_{(12\pi+1)t}(K), $$
and so
$$ G_t(0,\l)\ge \log\frac1{|\l|}-\frac{1-|\l|}2\ge \frac{1-|\l|}2, \qquad \l\in\oo_{(12\pi+1)t}(K), $$
as needed.
\end{proof}

\smallskip

So far we have been dealing with one compact set $K$. Keeping in mind the main topic of the paper, consider the intersection
\begin{equation*}
D_{t,s}(E,F):=\{w\in\bd:\rho_E(w)>t, \rho_F(w)>s\}=D_t(E)\cap D_s(F),
\end{equation*}
where $E$ and $F$ are compact sets on the unit circle, $0<t,s<1$. Denote by $\oo_{t,s}$ the connected component of this open set
(or, that is the same, the connected component of $\oo_t(E)\cap\oo_s(F)$) so that $0\in\oo_{t,s}$. Clearly, $\oo_{t,s}=\emptyset$
for $\max\{t,s\}\ge1$. It is not hard to check that
\begin{equation}\label{boundfor2}
\begin{split}
\partial\oo_{t,s} &\subset W_1\cup W_2\cup W_3, \qquad W_3:=E_t'\cap F_s', \\
W_1 &:=\{w\in\bd:\rho_E(w)=t,\ \rho_F(w)\ge s\}, \\
W_2 &:=\{w\in\bd:\rho_E(w)\ge t,\ \rho_F(w)=s\}.
\end{split}
\end{equation}
In particular,
\begin{equation}\label{boundfor2.1}
\partial\oo_{t,s} \subset \gg_t(E)\cup\gg_s(F)\cup (E_t'\cap F_s').
\end{equation}

The inclusion
\begin{equation}\label{inclus2}
D_{t,s}(E,F)\subset\oo_{t/2,s/2},
\end{equation}
can be verified in exactly the same way as \eqref{inclus} in Proposition \ref{pr2}.

We complete with the lower bound for the Green's function $G_{t,s}:=G_{\oo_{t,s}}$.

\begin{proposition}\label{pr4}
The Green's function $G_{t,s}(0,\cdot)$ for the domain $\oo_{t,s}$ with singularity at the origin and $0<t,s<(24\pi+1)^{-1}$ admits the lower bound
\begin{equation}\label{greenboun2}
G_{t,s}(0,\l) \ge\frac{1-|\l|}2, \qquad \l\in\oo_{(24\pi+1)t,(24\pi+1)s}.
\end{equation}
\end{proposition}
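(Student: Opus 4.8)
The plan is to mimic the proof of Proposition \ref{pr3}, but now splitting the boundary $\partial\oo_{t,s}$ into three pieces according to \eqref{boundfor2}, namely $W_1$, $W_2$ and $W_3 = E_t'\cap F_s'$. First I would record the estimate for the Dirichlet solution $h_{t,s}(0,\cdot)$ on each piece. On $W_1$ a point $\xi$ satisfies $1-|\xi|\le\rho_E(\xi)=t$, so $h_{t,s}(0,\xi)=\log(1/|\xi|)\le 2t$ exactly as before; on $W_2$ the same reasoning with $\rho_F(\xi)=s$ gives $h_{t,s}(0,\xi)\le 2s$; on $W_3$ the point $\xi$ lies on $\bt$, so $h_{t,s}(0,\xi)=\log(1/|\xi|)=0$. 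Thus, with $r:=\max\{t,s\}$, one has $h_{t,s}(0,\xi)\le 2r$ on the whole of $\partial\oo_{t,s}$ away from $W_3$, and $h_{t,s}(0,\xi)=0$ on $W_3$.

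Next I would invoke the Maximum Principle to majorize $h_{t,s}(0,\cdot)$ inside $\oo_{t,s}$ by a multiple of a harmonic measure. Since $W_1\subset\gg_t(E)$ and $W_2\subset\gg_s(F)$, Proposition \ref{pr1} applies on each of these arcs: $\o(\l,E_t;\bd)\ge 1/3$ for $\l\in\gg_t(E)$ and $\o(\l,F_s;\bd)\ge 1/3$ for $\l\in\gg_s(F)$. Hence the harmonic function $6r\,[\o(\l,E_t;\bd)+\o(\l,F_s;\bd)]$ dominates $h_{t,s}(0,\cdot)$ on $W_1\cup W_2$ and is nonnegative on $W_3$, so by the Maximum Principle
$$ h_{t,s}(0,\l)\le 6r\,\bigl[\o(\l,E_t;\bd)+\o(\l,F_s;\bd)\bigr], \qquad \l\in\oo_{t,s}. $$
Now I apply the upper bound \eqref{ubouhm} of Proposition \ref{ubound} to each term. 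With $l=1/24$ and $k=24\pi+1$, for $\l\in\oo_{kt}(E)$ we get $\o(\l,E_t;\bd)\le (24t)^{-1}(1-|\l|)$, and similarly $\o(\l,F_s;\bd)\le (24s)^{-1}(1-|\l|)$ for $\l\in\oo_{ks}(F)$. For $\l\in\oo_{kt,ks}\subset\oo_{kt}(E)\cap\oo_{ks}(F)$ both bounds hold simultaneously, so $6r\,\o(\l,E_t;\bd)\le 6r(24t)^{-1}(1-|\l|)\le (1-|\l|)/4$ since $r\le t$ would be false in general — here I must be a bit careful: $6r/(24t)\le 1/4$ only when $r\le t$, which need not hold. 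The fix is to bound $6r\,\o(\l,E_t;\bd)$ using $r\ge t$ is not available either; instead note $\o(\l,E_t;\bd)\le l t^{-1}(1-|\l|)$ gives $6t\,\o(\l,E_t;\bd)\le 6l(1-|\l|)=(1-|\l|)/4$, and likewise $6s\,\o(\l,F_s;\bd)\le(1-|\l|)/4$; since on $W_1$ the bound on $h$ is $2t$ (not $2r$) and on $W_2$ it is $2s$, the correct majorant is $6t\,\o(\l,E_t;\bd)+6s\,\o(\l,F_s;\bd)$, and each term is $\le(1-|\l|)/4$.

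Putting these together yields $h_{t,s}(0,\l)\le (1-|\l|)/2$ for $\l\in\oo_{(24\pi+1)t,(24\pi+1)s}$, and therefore
$$ G_{t,s}(0,\l)=\log\frac1{|\l|}-h_{t,s}(0,\l)\ge (1-|\l|)-\frac{1-|\l|}2=\frac{1-|\l|}2, $$
using $\log(1/|\l|)\ge 1-|\l|$. The only genuinely delicate point is the bookkeeping in the two-set Maximum Principle argument: one must choose the comparison function as the \emph{sum} $6t\,\o(\cdot,E_t;\bd)+6s\,\o(\cdot,F_s;\bd)$ (with the separate coefficients $t$ and $s$, not a common $\max\{t,s\}$) so that the constant $l=1/24$ — half of the value $1/12$ used in Proposition \ref{pr3} — exactly compensates for having two terms. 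Everything else is a routine transcription of the one-set proof.
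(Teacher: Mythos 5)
Your proof is correct and follows essentially the same route as the paper: bound the Dirichlet solution $h_{t,s}(0,\cdot)$ on the boundary pieces by $2t$, $2s$ and $0$ respectively, majorize it inside $\oo_{t,s}$ by $6t\,\o(\cdot,E_t;\bd)+6s\,\o(\cdot,F_s;\bd)$ via Proposition \ref{pr1} and the Maximum Principle, and then apply Proposition \ref{ubound} with $l=1/24$, $k=24\pi+1$. The detour through $r=\max\{t,s\}$ was unnecessary, but you correctly identified and repaired it, arriving at exactly the comparison function the paper uses.
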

\begin{proof}
We follow the argument from the proof of Proposition \ref{pr3}. Write
$$ G_{t,s}(0,\l)=\log\frac1{|\l|}-h(\l), \quad h(\z)=\log\frac1{|\z|}, \quad \z\in\partial\oo_{t,s}, $$
so $h(\z)=0$ for $\z\in\partial\oo_{t,s}\cap\bt$. Since
$$ |\z|\ge 1-t>1/2, \quad \z\in\gg_t(E), \qquad |\z|\ge 1-s>1/2, \quad \z\in\gg_s(F), $$
we have
\begin{equation}\label{onbound}
\begin{split}
\log\frac1{|\z|} &\le 2(1-|\z|)\le 2t, \qquad \z\in\gg_t(E), \\
\log\frac1{|\z|} &\le 2(1-|\z|)\le 2s, \qquad \z\in\gg_s(F).
\end{split}
\end{equation}
In view of \eqref{boundfor2.1}, \eqref{onbound} and Proposition \ref{pr1}, it follows from the Maximum Principle that
$$ h(\l)\le 6t\o(\l,E_t;\bd)+6s\o(\l,F_s;\bd), \qquad \l\in\oo_{t,s}. $$
We apply the upper bound for the harmonic measure \eqref{ubouhm}
$$ t\o(\l,E_t;\bd)+s\o(\l,F_s;\bd)\le 2l(1-|\l|), \qquad \l\in\oo_{t,s}, $$
so for $l=1/24$, $k=24\pi+1$ we come to
$$ h(\l)\le\frac{1-|\l|}2\le\frac12\,\log\frac1{|\l|} \ \Rightarrow \ G_{t,s}(0,\l)\ge\frac12\,\log\frac1{|\l|}\ge\frac{1-|\l|}2, $$
as claimed.
\end{proof}

\subsection{Harmonic majorant}
The result below concerns particular subharmonic functions and their harmonic majorants.
\smallskip

\if{Given a compact set $K$ on $\bt$ and $a>0$, assume that $\rho_K^{-a}\in L^1(\bt)$. Then the function
\begin{equation}\label{specsub1}
 v_{K,a}(z):=\frac1{\rho_K^a(z)}\,, \qquad z\in\bd,
 \end{equation}
is subharmonic and admits the harmonic majorant
\begin{equation}\label{hama1}
v_{K,a}(z)\le P_{K,a}(z):=\int_\bt\frac{1-|z|^2}{|\z-z|^2}\,\frac{m(d\z)}{\rho_K^a(\z)}\,.
\end{equation}
}\fi
\begin{proposition}\label{harmaj}
Given two compact sets $E$ and $F$ on the unit circle, and $a,b\ge0$, assume that $\rho_E^{-a}\rho_F^{-b}\in L^1(\bt)$.
Then the function
\begin{equation}\label{specsub2}
v_{a,b}(z):=\frac1{\rho_E^a(z)\,\rho_F^b(z)}\,, \qquad z\in\bd,
\end{equation}
is subharmonic and admits the harmonic majorant
\begin{equation}\label{hama2}
v_{a,b}(z)\le P_{a,b}(z):=\int_\bt\frac{1-|z|^2}{|\z-z|^2}\,\frac{m(d\z)}{\rho_E^a(\z)\,\rho_F^b(\z)}\,.
\end{equation}
\end{proposition}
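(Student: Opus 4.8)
The plan is to verify the two assertions of Proposition~\ref{harmaj} separately, since subharmonicity is a local statement while the harmonic majorant bound is essentially the fact that $v_{a,b}$ lies below its Poisson integral. First I would observe that $\rho_E^{-a}$ and $\rho_F^{-b}$ are each superharmonic on $\bd$: indeed $\rho_E(w)=\mathrm{dist}(w,E)$ is a concave function of $w$ on any convex set disjoint from $E$, being an infimum of affine functions $w\mapsto|w-\z|$'s lower envelope — more precisely $\rho_E$ is concave where it is smooth and, being a distance function to a set on $\bt$, it is superharmonic on $\bd$; then $t\mapsto t^{-a}$ is convex and decreasing on $(0,\infty)$, so $\rho_E^{-a}$ is superharmonic, hence $-\rho_E^{-a}$ is subharmonic. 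The product of two positive superharmonic functions need not be superharmonic, so for $v_{a,b}$ itself I would argue differently: away from $E\cup F$ the function $\rho_E^a\rho_F^b$ is positive, and on $\bd\setminus(E\cup F)$ one checks directly that $\log v_{a,b}=-a\log\rho_E-b\log\rho_F$; since $\log\rho_E$ is harmonic where $\rho_E$ is given by a single nearest point and superharmonic in general (as $\log$ of a superharmonic positive function that is locally a min of harmonic functions), $-a\log\rho_E-b\log\rho_F$ is subharmonic, whence $v_{a,b}=\exp(\log v_{a,b})$ is subharmonic as the composition of a subharmonic function with an increasing convex function. One must also check upper semicontinuity and the sub-mean-value property at points of $\bd$, which is immediate since $v_{a,b}$ is continuous and finite on the open disk (as $E,F\subset\bt$).

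For the majorant bound \eqref{hama2} I would proceed as follows. The candidate majorant $P_{a,b}$ is the Poisson integral of the boundary function $\rho_E^{-a}\rho_F^{-b}$, which is in $L^1(\bt)$ by hypothesis, so $P_{a,b}$ is a well-defined harmonic function on $\bd$. To show $v_{a,b}\le P_{a,b}$ I would invoke the standard fact (essentially the Fatou-type boundary behaviour plus the maximum principle for subharmonic functions): one approximates the boundary set by thickened neighbourhoods. Fix $0<r<1$ and consider $v_{a,b}$ on the disk $r\bd$; it is continuous there. For each $\delta>0$ let $g_\delta(\z)=\min\{\rho_E^{-a}(\z)\rho_F^{-b}(\z),\ \delta^{-a-b}\}$ be the truncation, and let $P_\delta$ be its Poisson integral — a bounded harmonic function. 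On the circle $|z|=1$ the (upper semicontinuous) boundary values of $v_{a,b}$ are $\le \rho_E^{-a}\rho_F^{-b}$ wherever the latter is finite, and $v_{a,b}$ is bounded on $r\bd$. Then I would apply the maximum principle to the subharmonic function $v_{a,b}-P_\delta$ on an appropriate domain exhausting $\bd$ away from $E\cup F$, and on a small neighbourhood of $E\cup F$ use that $P_\delta$ is bounded while controlling $v_{a,b}$ via the integrability, or more cleanly: bound $v_{a,b}(z)\le P_{a,b}(z)$ by first proving $v_{a,b}(rz)\le$ Poisson integral of $v_{a,b}(r\cdot)$, which holds because $v_{a,b}$ is subharmonic on a neighbourhood of $\overline{r\bd}$, then let $r\to1$ using that $v_{a,b}(r\z)\to\rho_E^{-a}(\z)\rho_F^{-b}(\z)$ for a.e.\ $\z\in\bt$ together with Fatou's lemma and the inequality \eqref{ineq}, i.e.\ $\rho_E(r\z)\ge\frac12\rho_E(\z)$, which gives the uniform integrable domination $v_{a,b}(r\z)\le 2^{a+b}\rho_E^{-a}(\z)\rho_F^{-b}(\z)$ needed to pass to the limit under the integral.

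The main obstacle I expect is the passage to the limit at the boundary: one needs that the radial (or nontangential) limits of $v_{a,b}$ are genuinely $\rho_E^{-a}\rho_F^{-b}$ a.e.\ and, simultaneously, a domination that is uniform in $r$ and integrable so that the dominated (or monotone, via Fatou) convergence theorem applies to $\int_\bt \frac{1-|z|^2}{|\z-z|^2}v_{a,b}(r\z)\,m(d\z)$. Here the elementary inequality \eqref{ineq} is exactly what saves the day, because it turns the $r$-dependent integrand into $2^{a+b}\rho_E^{-a}(\z)\rho_F^{-b}(\z)$, an $r$-free $L^1(\bt)$ majorant; the pointwise convergence $v_{a,b}(r\z)\to v_{a,b}(\z)$ for $\z\notin E\cup F$ is trivial by continuity of $\rho_E,\rho_F$, and $E\cup F$ has measure zero. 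Once the inequality $v_{a,b}(rz)\le \int_\bt \frac{r^2-|z|^2}{|r\z-z|^2}\,v_{a,b}(r\z)\,m(d\z)$ for $|z|<r$ is in hand (sub-mean-value / Poisson–Jensen for subharmonic functions on the disk $r\bd$), letting $r\uparrow1$ yields \eqref{hama2}. The subharmonicity part is routine once the superharmonicity of $\log\rho_E$ (equivalently, that $\rho_E$ is the infimum over $\z\in E$ of the harmonic functions $z\mapsto$ an appropriate logarithmic quantity — most simply, that $-\log\rho_E$ is subharmonic because $\rho_E$ is concave on $\bd$, hence $\log\rho_E$ concave, hence superharmonic) is granted.
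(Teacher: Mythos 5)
Your proof of the majorant bound \eqref{hama2} is exactly the paper's: apply the sub-mean-value/Poisson inequality to the dilates $v_{a,b}(rz)$, then let $r\uparrow1$ using \eqref{ineq}, which gives the $r$-independent integrable majorant $v_{a,b}(r\z)\le 2^{a+b}\rho_E^{-a}(\z)\rho_F^{-b}(\z)$ needed for dominated convergence. (The detour through truncations $g_\delta$ and exhausting domains is unnecessary, as you yourself note.) For subharmonicity the paper writes $v_{a,b}(z)=\sup_{\xi\in E,\,\eta\in F}|(z-\xi)^{-a}(z-\eta)^{-b}|$ and invokes \cite[Theorem 2.4.7]{Ran} on suprema of subharmonic families; your route --- $\log v_{a,b}=-a\log\rho_E-b\log\rho_F$ is subharmonic and $\exp$ is convex increasing --- is an equivalent and equally valid phrasing, \emph{provided} the superharmonicity of $\log\rho_E$ is justified correctly: $\log\rho_E=\inf_{\xi\in E}\log|\cdot-\xi|$ is a continuous pointwise infimum of functions harmonic on $\bd$ (each $\xi$ lies on $\bt$), hence superharmonic. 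That is the clean reason, and it is only hinted at in your parenthetical ``locally a min of harmonic functions.''

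Several of the auxiliary assertions in your first paragraph are, however, false and should be deleted, even though you abandon that route. The distance function $\rho_E$ is \emph{not} concave on $\bd$ and \emph{not} superharmonic: already for $E=\{1\}$ one has $\Delta|w-1|=|w-1|^{-1}>0$, so $\rho_E$ is (strictly) subharmonic there; moreover $w\mapsto|w-\xi|$ is convex, not affine, so $\rho_E$ is not an infimum of affine functions and no concavity follows. Consequently $\rho_E^{-a}$ is not superharmonic --- it is in fact \emph{subharmonic}, being the continuous supremum $\sup_{\xi\in E}|w-\xi|^{-a}$ of log-harmonic functions --- and in any case composing a superharmonic function with a convex \emph{decreasing} function yields a subharmonic function, not a superharmonic one, so your chain of implications has the wrong sign twice. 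None of this affects the validity of the argument you ultimately run, but as written those sentences are incorrect mathematics and cannot stand.
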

\begin{proof}
The case $a=b=0$ is trivial, so let $a+b>0$. By \cite[Theorem 2.4.7]{Ran}, the function
$$ v_{a,b}(z)=\sup_{\xi\in E,\,\eta\in F} |(z-\xi)^{-a}(z-\eta)^{-b}| $$
is subharmonic. The inequality \eqref{ineq} implies
\begin{equation}\label{ineq2}
v_{a,b}(r\z)\le 2^{a+b}v_{a,b}(\z), \qquad \z\in\bt, \quad 0\le r\le 1.
\end{equation}

The standard Maximum Principle states that
$$ v_{a,b}(rz)\le\int_\bt\frac{1-|z|^2}{|\z-z|^2}\,v_{a,b}(r\z)\,m(d\z), \qquad z\in\bd, \quad r<1. $$
The bound \eqref{hama2} is now immediate from the latter inequality as $r\to 1-0$
due to \eqref{ineq2} and the Lebesgue Dominated convergence theorem.
\end{proof}

\begin{remark}
As a matter of fact, $P_{a,b}$ is the {\it least} harmonic majorant for $v_{a,b}$, see, e.g., \cite[pp.36-37]{Gar}.
\end{remark}

\subsection{Layer cake representation}
A key ingredient in our argument is the fundamental result in Analysis, known as the ``layer cake representation'' (LCR) see, e.g.,
\cite[Theorem 1.13]{Li-Lo}.

{\bf Theorem LCR}. Let $(\ll,\nu)$ be a measure space, and $h\ge0$ a measurable function on $\ll$. Then for $c>0$ the equality holds
\begin{equation}\label{lcr}
\int_\ll h^c(\tau)\,\nu(d\tau)=c\int_0^\infty x^{c-1}\nu(\{\tau: h(\tau)>x\})\,dx.
\end{equation}

In what follows we make use of the two-dimensional analogue of this result.

\begin{proposition}\label{pr5}
Let $f,g\ge0$ be measurable functions on the measure space $(\ll,\s)$, and $\a,\b>0$. Then
\begin{equation}\label{lcr2d}
\begin{split}
I & := \int_\ll f^\a(\tau)g^\b(\tau)\,\s(d\tau) \\
& = \a\b\int_0^\infty\int_0^\infty x^{\a-1}y^{\b-1}\s(\{\tau: f(\tau)>x,\, g(\tau)>y\})\,dx\,dy.
\end{split}
\end{equation}
\end{proposition}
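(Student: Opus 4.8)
The plan is to reduce the two-dimensional identity to two successive applications of the one-dimensional Theorem LCR. First I would apply Theorem LCR to the function $\tau\mapsto f(\tau)$ with exponent $c=\a$, but with respect to a \emph{modified} measure rather than $\s$ itself: namely, for the fixed function $g$, consider the measure $\nu_\b$ on $\ll$ defined by $\nu_\b(A):=\int_A g^\b(\tau)\,\s(d\tau)$. Then Theorem LCR gives
\begin{equation*}
I=\int_\ll f^\a(\tau)\,\nu_\b(d\tau)=\a\int_0^\infty x^{\a-1}\,\nu_\b(\{\tau: f(\tau)>x\})\,dx.
\end{equation*}
The inner quantity $\nu_\b(\{f>x\})=\int_{\{f>x\}} g^\b(\tau)\,\s(d\tau)$ is now, for each fixed $x$, an integral of $g^\b$ against the measure $\s$ restricted to the set $\{f>x\}$. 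Applying Theorem LCR a second time — this time to $g$ with exponent $\b$ and with respect to the restricted measure $\s|_{\{f>x\}}$ — yields
\begin{equation*}
\nu_\b(\{f>x\})=\b\int_0^\infty y^{\b-1}\,\s(\{\tau: f(\tau)>x,\ g(\tau)>y\})\,dy.
\end{equation*}
Substituting this back and using Tonelli's theorem to justify interchanging the $x$- and $y$-integrations (everything is nonnegative, so no integrability hypothesis is needed) gives exactly \eqref{lcr2d}.

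The only points requiring care are measurability and the legitimacy of the manipulations. I would note that $\nu_\b$ is a well-defined (possibly infinite) Borel measure since $g^\b\ge0$ is measurable, so the first application of Theorem LCR is literally an instance of \eqref{lcr}. For the second application one uses that $\s$ restricted to $\{f>x\}$ is again a measure, and that the joint super-level set $\{f>x,\ g>y\}$ is measurable as an intersection of two measurable sets; the map $(x,y)\mapsto\s(\{f>x,\ g>y\})$ is jointly measurable (it is monotone and right-continuous in each variable, hence Borel), which is what Tonelli needs. If $I=+\infty$ the identity still holds in $[0,+\infty]$, as all three expressions are then infinite together.

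I do not anticipate a genuine obstacle here; the result is essentially a bookkeeping exercise built on iterating the classical layer-cake formula. The only mild subtlety — and the one I would be most careful to state explicitly — is the interchange of the two improper integrals, which is handled by Tonelli once one observes that all integrands are nonnegative and jointly measurable. I would present the proof in the three displayed lines above, adding one sentence each for the measurability of $\nu_\b$ and of the joint distribution function, and one sentence invoking Tonelli.
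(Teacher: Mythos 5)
Your proposal is correct and follows essentially the same route as the paper: apply the one-dimensional layer-cake formula first to $f^\a$ against the auxiliary measure $\nu(d\tau)=g^\b(\tau)\,\s(d\tau)$, then to $g^\b$ against $\s$ restricted to $\{f>x\}$, and finish with Fubini/Tonelli. Your extra remarks on measurability and on the nonnegativity that licenses Tonelli are sound and, if anything, slightly more careful than the paper's two-line argument.
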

\begin{proof}
We apply the LCR \eqref{lcr} twice. Put $\nu(d\tau):=g^\b\s(d\tau)$, so
$$ I=\int_\ll f^\a(\tau)\,\nu(d\tau)=\a\int_0^\infty x^{\a-1}\nu(\{\tau: f(\tau)>x\})\,dx. $$
Write $\ll_x:=\{\tau: f(\tau)>x\}$, and apply \eqref{lcr} once again
\begin{equation*}
\begin{split}
\nu(\ll_x) &=\int_{\ll_x} g^\b(\tau)\,\s(d\tau)=\b\int_0^\infty y^{\b-1}\nu(\{\tau\in\ll_x: g(\tau)>y\})\,dy \\
& = \b\int_0^\infty y^{\b-1}\nu(\{\tau\in\ll: f(\tau)>x,\,g(\tau)>y\})\,dy,
\end{split}
\end{equation*}
so Fubini's theorem completes the proof.
\end{proof}

\section{Problem with two compact sets}
\label{s2}

Let us go back to our main problem concerning the Blaschke-type condition for the Riesz measure of the subharmonic function
which can grow at the direction of two sets of singularities on the unit circle.

As a warm-up, we prove the following result.

\begin{theorem}\label{th1}
Assume that $E$ and $F$ are two compact sets on $\bt$ so that \eqref{int3} holds with $a=p$, $b=q$. For each subharmonic function
$v\in\css_{p,q}(E,F)$, $v(0)\ge0$, with the Riesz measure $\mu$, the Blaschke condition holds
\begin{equation}\label{lightbl}
\int_\bd (1-|\l|)\,\mu(d\l)\le M\|\rho_E^{-p}\rho_F^{-q}\|_1.
\end{equation}
\end{theorem}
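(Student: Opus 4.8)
The plan is to carry out the three-step program from the introduction in the simplest possible setting, where the auxiliary domain is all of $\bd$. Since $v\in\css_{p,q}(E,F)$ we have the pointwise bound $v(z)\le M\rho_E^{-p}(z)\rho_F^{-q}(z)$, and the right-hand side is exactly $M\,v_{p,q}(z)$ in the notation of Proposition \ref{harmaj}. By hypothesis \eqref{int3} holds with $a=p$, $b=q$, so $\rho_E^{-p}\rho_F^{-q}\in L^1(\bt)$ and Proposition \ref{harmaj} applies: $v_{p,q}$ admits the harmonic majorant $P_{p,q}$. Hence $v\le M P_{p,q}$ on $\bd$, and $M P_{p,q}$ is a (not necessarily least) harmonic majorant for $v$ on the whole disk.

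Next I would invoke the Riesz representation \eqref{subrepr} with $\oo=\bd$: there is a least harmonic majorant $u$ for $v$ with $v(z)=u(z)-\int_\bd G_\bd(z,\l)\,\mu(d\l)$, and $u\le M P_{p,q}$. Evaluating at $z=0$ and using $v(0)\ge0$ together with $G_\bd(0,\l)=\log(1/|\l|)$, I get
\begin{equation*}
\int_\bd \log\frac1{|\l|}\,\mu(d\l)\le u(0)\le M P_{p,q}(0).
\end{equation*}
Now two elementary estimates finish the argument. On the one hand $\log(1/|\l|)\ge 1-|\l|$ for $\l\in\bd$, giving the left inequality in \eqref{blashdisk} and hence $\int_\bd(1-|\l|)\,\mu(d\l)\le M P_{p,q}(0)$. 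On the other hand, evaluating the Poisson integral \eqref{hama2} at $z=0$ collapses the Poisson kernel to $1$, so
\begin{equation*}
P_{p,q}(0)=\int_\bt\frac{m(d\z)}{\rho_E^p(\z)\,\rho_F^q(\z)}=\|\rho_E^{-p}\rho_F^{-q}\|_1.
\end{equation*}
Combining these yields \eqref{lightbl}.

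There is essentially no hard step here — this theorem is explicitly flagged as a ``warm-up,'' and the genuine difficulty (building a nontrivial subdomain $\oo_{t,s}$ and patching together the Green's-function bounds of Propositions \ref{pr3}, \ref{pr4} with the two-dimensional layer cake representation of Proposition \ref{pr5}) is deferred to the proof of the main Theorem \ref{auxth}. The only points requiring a word of care are: checking that the majorant $M P_{p,q}$ is finite somewhere (equivalently a.e.), which is immediate from $\rho_E^{-p}\rho_F^{-q}\in L^1(\bt)$ and the fact that the Poisson integral of an $L^1$ boundary function is harmonic and finite on $\bd$, so that $v$ really does have a least harmonic majorant and \eqref{subrepr} is legitimate; and noting that $\mu\ge0$, so all the integrals above are of a fixed sign and the inequalities propagate without integrability worries. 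I would also remark that $u(0)\le U(0)=M P_{p,q}(0)$ is just the minimality of the least harmonic majorant, exactly as in \eqref{blash1}.
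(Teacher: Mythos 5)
Your proof is correct and follows exactly the paper's route: the paper likewise combines Proposition \ref{harmaj} (giving the harmonic majorant $U=MP_{p,q}$ with $U(0)=M\|\rho_E^{-p}\rho_F^{-q}\|_1$) with the one-step estimate \eqref{blashdisk} via the Riesz representation on $\oo=\bd$. Your write-up merely spells out the details (finiteness of the Poisson integral, $\log(1/|\l|)\ge 1-|\l|$, minimality of the least harmonic majorant) that the paper leaves implicit.
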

\begin{proof}
By Proposition \ref{harmaj}, $v$ admits the harmonic majorant $U=MP_{p,q}$ with $U(0)=M\|\rho_E^{-p}\rho_F^{-q}\|_1$.
Relation \eqref{blashdisk} completes the proof.
\end{proof}

The case when $\min(p,q)=0$, so we actually have one compact set, was elaborated in \cite{fg1}.

The main result of the paper, Theorem \ref{auxth}, concerns the rest of the values for $a$ and $b$, that is, either $0\le a<p$ or $0\le b<q$.

\smallskip

\nt {\it Proof of Theorem \ref{auxth}.} \newline
(i).\,We proceed in three steps, following the procedure outlined in Introduction.

Step 1. Write the hypothesis \eqref{growth} as
$$ v(z)\le v_{a,b}(z)\,\frac{M}{\rho_E^{p-a}(z)\rho_F^{q-b}(z)}\,, \quad z\in\bd. $$
In view of \eqref{boundfor2}, Proposition \ref{harmaj}, and the Maximum Principle, we come to the bound
$$ v(z)\le U(z)=P_{a,b}(z)\,\frac{M}{t^{p-a}\,s^{q-b}}\,, \quad z\in\oo_{t,s}. $$

Step 2. Relation \eqref{blash1} now reads
$$ \int\limits_{\oo_{t,s}} G_{t,s}(0,\l)\,\mu(d\l)\le u(0)\le U(0)=\frac{M}{t^{p-a}\,s^{q-b}}\,\bigl\|\rho_E^{-a}\,\rho_F^{-b}\bigr\|_1. $$
By Proposition \ref{pr4} with $\kappa=24\pi+1$, one has
$$ \int\limits_{\oo_{\kappa t,\kappa s}} (1-|\l|)\,\mu(d\l)\le\frac{2M}{t^{p-a}\,s^{q-b}}\,\bigl\|\rho_E^{-a}\,\rho_F^{-b}\bigr\|_1. $$
By \eqref{inclus2}, $D_{2\kappa t, 2\kappa s}(E,F)\subset \oo_{\kappa t,\kappa s}$, so putting
$$ \xi:=2\kappa t, \quad \eta:=2\kappa s, \qquad 0\le t,s\le\frac1{2\kappa}, $$
we end up with the bound
\begin{equation}\label{bound9}
\int\limits_{D_{\xi,\eta}(E,F)} (1-|\l|)\,\mu(d\l)\le 2(2\kappa)^{p+q-a-b}\,\frac{M}{\xi^{p-a}\,\eta^{q-b}}\,\bigl\|\rho_E^{-a}\,\rho_F^{-b}\bigr\|_1.
\end{equation}

Step 3. The LCR theorem comes into play here. By Proposition \ref{pr5} with
$$ \ll=\bd, \ \s=(1-|\l|)\mu, \ f=\rho_E, \ g=\rho_F, \ \a=p-a+\ep, \ \b=q-b+\ep, $$
we see that
$$ \int_\bd \rho_E^\a(\l)\rho_F^\b(\l)\s(d\l)=\a\b\int_0^2\int_0^2\xi^{\a-1}\eta^{\b-1}\s\Bigl(\{\l: \rho_E(\l)>\xi, \rho_F(\l)>\eta\}\Bigr)d\xi d\eta. $$
But, due to \eqref{bound9},
\begin{equation*}
\begin{split}
\s\Bigl(\{\l: \rho_E(\l)>\xi, \rho_F(\l)>\eta\}\Bigr) &=\int\limits_{D_{\xi,\eta}(E,F)} (1-|\l|)\,\mu(d\l) \\
&\le \frac{CM}{\xi^{p-a}\,\eta^{q-b}}\,\bigl\|\rho_E^{-a}\,\rho_F^{-b}\bigr\|_1,
\end{split}
\end{equation*}
so, finally,
\begin{equation*}
\int_\bd \rho_E^\a(\l)\rho_F^\b(\l)(1-|\l|)\,\mu(d\l)\le \a\b\,CM\,\bigl\|\rho_E^{-a}\,\rho_F^{-b}\bigr\|_1\,\int_0^2 \xi^{\ep-1}d\xi\,
\int_0^2 \eta^{\ep-1}d\eta,
\end{equation*}
and the first statement is proved.

(ii). Assume now that $0\le a<p$ and $b\ge q$. The argument is the same but simpler, as we appeal to the domain $\oo_t(E)$ and the standard one-dimensional
LCR theorem \eqref{lcr}. Indeed, as in Step 1, we have
$$ v(z)\le U(z)=P_{a,b}(z)\,\frac{2^{b-q}M}{t^{p-a}}\,, \qquad z\in\oo_t(E). $$
Next, relation \eqref{blash1} provides
$$ \int\limits_{\oo_t(E)} G_t(0,\l)\,\mu(d\l)\le \frac{2^{b-q}M}{t^{p-a}}\,\bigl\|\rho_E^{-a}\,\rho_F^{-b}\bigr\|_1, $$
so, by Proposition \ref{pr3} with $\kappa=12\pi+1$,
$$ \int\limits_{\oo_{\kappa t}(E)} (1-|\l|)\,\mu(d\l)\le \frac{2^{b-q+1}M}{t^{p-a}}\,\bigl\|\rho_E^{-a}\,\rho_F^{-b}\bigr\|_1. $$
By \eqref{inclus}, $D_{2\kappa t}(E)\subset \oo_{\kappa t}(E)$, and so for $\xi=2\kappa t$ we have
$$ \int\limits_{D_{\xi}(E)} (1-|\l|)\,\mu(d\l)\le (2\kappa)^{p-a}\,\frac{2^{b-q+1}M}{\xi^{p-a}}\,\bigl\|\rho_E^{-a}\,\rho_F^{-b}\bigr\|_1. $$
An application of LCR theorem in the form \eqref{lcr} with
$$ \ll=\bd, \quad \nu(d\l)=(1-|\l|)\,\mu(d\l), \quad h=\rho_E, \quad c=p-a+\ep $$
leads to the first Blaschke-type condition in \eqref{blaii}. The proof of the second one is identical.  \hfill $\Box$

The case $a=b=0$ is important, for there are no integrability assumptions whatsoever.

\begin{corollary}
Given two compact sets $E$, $F$ on $\bt$, let a subharmonic function $v$, $v(0)\ge0$, belong to $\css_{p,q}(E,F)$. Then
for each $\ep>0$ there is a constant $C=C(p,q,\ep)$ so that
\begin{equation}\label{bla19}
\int_\bd \rho_E^{p+\ep}(\l)\,\rho_F^{q+\ep}(\l)(1-|\l|)\,\mu(d\l)\le CM.
\end{equation}
\end{corollary}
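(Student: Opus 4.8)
The plan is to derive this as a direct specialization of Theorem~\ref{auxth} by choosing the integrability exponents $a$ and $b$ to be $0$. First I would observe that with $a=b=0$ the condition \eqref{int3} becomes $\|\rho_E^0\rho_F^0\|_1=\int_\bt m(d\z)=1<\infty$, so the hypothesis \eqref{int3} is automatically satisfied for \emph{any} pair of compact sets $E,F\subset\bt$; this is precisely why no integrability assumption is needed in the corollary. Moreover $0\le a<p$ and $0\le b<q$ hold trivially since $p,q>0$, so we are in case $(i)$ of Theorem~\ref{auxth}.

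Next I would simply invoke \eqref{bla9} with $a=b=0$. The left-hand side becomes
$$ \int_\bd \rho_E^{p+\ep}(\l)\,\rho_F^{q+\ep}(\l)\,(1-|\l|)\,\mu(d\l), $$
and the right-hand side becomes $CM\cdot\|\rho_E^{0}\rho_F^{0}\|_1 = CM$, with $C=C(p,q,0,0,\ep)$, which we rename $C=C(p,q,\ep)$. This yields \eqref{bla19} verbatim.

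The only point requiring a word of care — and the closest thing to an obstacle — is that Theorem~\ref{auxth} as stated carries the standing hypothesis $v(0)\ge0$, which the corollary also assumes, so there is no gap there; and that $v_{0,0}\equiv1$ is harmless as a majorant factor (indeed the proof of case $(i)$ writes $v(z)\le v_{a,b}(z)\,M\rho_E^{-(p-a)}\rho_F^{-(q-b)}$, and with $a=b=0$ this is just the original growth bound \eqref{growth}, so Proposition~\ref{harmaj} is applied with the constant function, giving the trivial least harmonic majorant $P_{0,0}\equiv1$). Thus the mechanism of Theorem~\ref{auxth} goes through unchanged. In short, the proof is one line: \emph{apply Theorem~\ref{auxth}$(i)$ with $a=b=0$}, noting that \eqref{int3} then holds unconditionally and $\|\rho_E^{-a}\rho_F^{-b}\|_1=1$.
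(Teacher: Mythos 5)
Your proposal is correct and matches the paper's (implicit) argument exactly: the corollary is stated immediately after the remark that the case $a=b=0$ requires no integrability assumptions, and it is indeed just Theorem \ref{auxth}$(i)$ with $a=b=0$, where $\|\rho_E^{0}\rho_F^{0}\|_1=1$. Nothing further is needed.
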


The results of Theorem \ref{auxth} can be extended to the case of $n$ compact sets on the unit circle with no additional efforts.

\begin{theorem}
Let $K_1,\dots,K_n$ be compact subsets of $\bt$, and let $v$ be a subharmonic function on $\bd$ with Riesz measure $\mu$ such that $v(0)\ge0$ and
 $$
   v(z)\le M\r_{K_1}^{-p_1}(z)\cdots \r_{K_n}^{-p_n}(z).\qquad z\in\bd.
 $$
Suppose that
$$
\r_{K_1}^{-a_1}(\z)\cdots \r_{K_n}^{-a_n}(\z)\in L^1(\bt)
$$
for some
$$
a_1<p_1,\dots,a_k<p_k,\ a_{k+1}=p_{k+1},\dots, a_n=p_n,\qquad k\le n.
$$
Then for each $\ep>0$ there is a constant $C=C(p_1,\dots,p_n,a_1,\dots,a_k,\ep)$ so that
$$
\int_\bd\r_{K_1}^{p_1-a_1+\ep}(\l)\cdots \r_{K_k}^{p_k-a_k+\ep}(\l)(1-|\l|)\,d\mu(\l) \le CM\|\r_{K_1}^{-a_1}(z)\cdots \r_{K_n}^{-a_n}(z)\|_1.
$$
\end{theorem}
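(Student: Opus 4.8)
The plan is to run, with $n$ compact sets in place of two, the same three-step argument that proves Theorem~\ref{auxth}, localizing the Green's function estimate to the $k$ \emph{non-critical} directions $K_1,\dots,K_k$ (those with $a_j<p_j$); the directions $K_{k+1},\dots,K_n$, for which $p_j=a_j$, then enter only through the integrability hypothesis and cost nothing, since $p_j-a_j=0$ there. Step~1 is the $n$-set version of Proposition~\ref{harmaj}: the function $v_{a_1,\dots,a_n}(z):=\r_{K_1}^{-a_1}(z)\cdots\r_{K_n}^{-a_n}(z)$ is subharmonic on $\bd$, since it equals $\sup\{|(z-\xi_1)^{-a_1}\cdots(z-\xi_n)^{-a_n}|:\xi_j\in K_j\}$ and each factor equals $\exp(-\sum_j a_j\log|z-\xi_j|)$ with harmonic exponent on $\bd$, hence is subharmonic; and, via the inequality $\r_{K_j}(z)\le 2\r_{K_j}(rz)$, the Poisson representation and dominated convergence, it admits the harmonic majorant $P_{a_1,\dots,a_n}$ with $P_{a_1,\dots,a_n}(0)=\|\r_{K_1}^{-a_1}\cdots\r_{K_n}^{-a_n}\|_1$.

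For Step~2, given $0<t_1,\dots,t_k<1$ put $D_{t_1,\dots,t_k}:=\{w\in\bd:\r_{K_j}(w)>t_j,\ 1\le j\le k\}$ and let $\oo_{t_1,\dots,t_k}$ be its connected component containing the origin; exactly as in Section~\ref{s1} one checks that $\partial\oo_{t_1,\dots,t_k}\subset\bigl(\bigcup_{j=1}^k\gg_{t_j}(K_j)\bigr)\cup\bigl(\bigcap_{j=1}^k(K_j)_{t_j}'\bigr)$ and $D_{2t_1,\dots,2t_k}\subset\oo_{t_1,\dots,t_k}$. Repeating the proof of Proposition~\ref{pr4}, one has $\log(1/|\z|)\le 2t_j$ on $\gg_{t_j}(K_j)$ and $=0$ on the $\bt$-part of the boundary, so Proposition~\ref{pr1} and the Maximum Principle give, for the harmonic $h$ with $G_{\oo_{t_1,\dots,t_k}}(0,\cdot)=\log(1/|\cdot|)-h$, the bound $h(\l)\le\sum_{j=1}^k 6t_j\,\o(\l,(K_j)_{t_j};\bd)$; then \eqref{ubouhm} with $l=1/(12k)$ and $\kappa:=12k\pi+1$ yields $h(\l)\le\tfrac12(1-|\l|)$, hence
$$ G_{\oo_{t_1,\dots,t_k}}(0,\l)\ge\frac{1-|\l|}2,\qquad \l\in\oo_{\kappa t_1,\dots,\kappa t_k},\quad 0<t_j<\kappa^{-1}. $$
On $\oo_{t_1,\dots,t_k}$ we may write $v\le v_{a_1,\dots,a_n}\cdot M\prod_{j=1}^k\r_{K_j}^{-(p_j-a_j)}\le P_{a_1,\dots,a_n}\cdot M\prod_{j=1}^k t_j^{-(p_j-a_j)}$ (the tail product over $j>k$ being $\equiv1$), so \eqref{blash1}, the Green's function bound, the inclusion $D_{2\kappa t_1,\dots,2\kappa t_k}\subset\oo_{\kappa t_1,\dots,\kappa t_k}$ and the substitution $\xi_j=2\kappa t_j$ give
$$ \int_{D_{\xi_1,\dots,\xi_k}}(1-|\l|)\,\mu(d\l)\le 2(2\kappa)^{\sum_{j=1}^k(p_j-a_j)}\,M\,\Bigl\|\r_{K_1}^{-a_1}\cdots\r_{K_n}^{-a_n}\Bigr\|_1\,\prod_{j=1}^k\xi_j^{-(p_j-a_j)},\qquad 0<\xi_j\le1. $$

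Step~3 is the $k$-dimensional layer cake representation, the obvious iterate of Proposition~\ref{pr5} obtained by applying Theorem~LCR $k$ times: for measurable $f_1,\dots,f_k\ge0$ and $c_1,\dots,c_k>0$,
$$ \int_\ll f_1^{c_1}\cdots f_k^{c_k}\,d\s=c_1\cdots c_k\int_{(0,\infty)^k}x_1^{c_1-1}\cdots x_k^{c_k-1}\,\s\bigl(\{\tau: f_j(\tau)>x_j,\ 1\le j\le k\}\bigr)\,dx_1\cdots dx_k. $$
Taking $\ll=\bd$, $\s=(1-|\l|)\mu$, $f_j=\r_{K_j}$, $c_j=p_j-a_j+\ep$, and using $\r_{K_j}\le2$ (so the integral runs over $(0,2)^k$) together with the previous bound on $\s(D_{\xi_1,\dots,\xi_k})$ — valid for $\xi_j\le1$ and extended to $\xi_j\in(1,2)$ by monotonicity of $D_{\xi_1,\dots,\xi_k}$ in each $\xi_j$ — each integrand factor becomes $x_j^{\ep-1}$ on $(0,1]$ and a bounded factor on $[1,2]$, so the iterated integral converges and one recovers exactly the asserted inequality with $C=C(p_1,\dots,p_k,a_1,\dots,a_k,\ep)$ (the dependence on $n$ entering only through $\kappa=12k\pi+1$, $k\le n$). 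I do not anticipate a genuine obstacle here: the only points needing care are the bookkeeping of $\kappa$ and the routine verification that the boundary structure, the inclusion $D_{2t_1,\dots,2t_k}\subset\oo_{t_1,\dots,t_k}$, the subharmonicity of $v_{a_1,\dots,a_n}$ and its majorant, and the iterate of Proposition~\ref{pr5} all pass verbatim from two to $n$ sets; everything else simply transcribes the proof of Theorem~\ref{auxth}.
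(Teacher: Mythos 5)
Your proposal is correct and is exactly the extension of the paper's proof of Theorem \ref{auxth} that the authors assert (without writing out) goes through ``with no additional efforts'': the localization of the Green's function estimate to the $k$ non-critical sets with $l=1/(12k)$, $\kappa=12k\pi+1$, the $n$-set harmonic majorant, and the $k$-fold iterate of the layer cake representation are all handled as intended. Your explicit remark on extending the measure bound from $\xi_j\le1$ to $\xi_j\in(1,2)$ by monotonicity is a detail the paper glosses over even in the two-set case, but it is routine and correct.
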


In view of further applications, let us mention a special case of subharmonic functions $v=\log|f|$ with $f$ analytic on the unit disk.

\begin{corollary}
Let an analytic function $f$, $|f(0)|\ge1$, satisfy the growth condition
\begin{equation}\label{growthanal}
\log|f(z)|\le\frac{M}{\rho_E^p(z)\,\rho_F^q(z)}\,, \qquad M,p,q>0,
\end{equation}
with two compact sets $E$, $F$ on the unit circle. Assume that the relation $\eqref{int3}$ holds
for some $0\le a<p$ and $0\le b<q$. Then for each $\ep>0$ there is a constant $C=C(p,q,a,b,\ep)$ so that
\begin{equation*}
\sum_{n=1}^\infty (1-|\l_n|)\,\rho_E^{p-a+\ep}(\l_n)\,\rho_F^{q-b+\ep}(\l_n)\le CM\,\bigl\|\rho_E^{-a}\rho_F^{-b}\bigr\|_1,
\end{equation*}
where $\{\l_n\}_{n\ge1}$ are the zeros of $f$ counting multiplicity.
\end{corollary}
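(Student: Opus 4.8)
The plan is to derive this Corollary directly from Theorem~\ref{auxth}(i) by the standard reduction from analytic functions to their logarithms. First I would set $v:=\log|f|$. Since $f$ is analytic on $\bd$ and $|f(0)|\ge1$, the function $v$ is subharmonic on $\bd$ with $v(0)=\log|f(0)|\ge0$, and the growth condition \eqref{growthanal} is precisely the statement that $v\in\css_{p,q}(E,F)$ with the same constants $M,p,q$. This verifies all the hypotheses of Theorem~\ref{auxth}(i), since we are assuming \eqref{int3} for $0\le a<p$ and $0\le b<q$.

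The second step is to identify the Riesz measure $\mu$ of $v=\log|f|$. By the Riesz representation of $\log|f|$ (equivalently, $\tfrac{1}{2\pi}\Delta\log|f|$ in the distributional sense), $\mu$ is the sum of unit point masses at the zeros of $f$, counted with multiplicity: $\mu=\sum_{n\ge1}\delta_{\l_n}$. This is the classical Poisson--Jensen fact; I would simply cite it (e.g.\ \cite{Ran}).

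The final step is to substitute this $\mu$ into the conclusion \eqref{bla9} of Theorem~\ref{auxth}(i). The integral $\int_\bd \rho_E^{p-a+\ep}(\l)\,\rho_F^{q-b+\ep}(\l)(1-|\l|)\,\mu(d\l)$ becomes exactly the series $\sum_{n\ge1}(1-|\l_n|)\,\rho_E^{p-a+\ep}(\l_n)\,\rho_F^{q-b+\ep}(\l_n)$, and the bound $CM\,\|\rho_E^{-a}\rho_F^{-b}\|_1$ carries over verbatim with the same constant $C=C(p,q,a,b,\ep)$.

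There is essentially no obstacle here: the Corollary is a direct specialization, and the only point requiring any care is the (routine) identification of the Riesz measure of $\log|f|$ with the zero-counting measure of $f$, together with checking that $|f(0)|\ge1$ gives $v(0)\ge0$ as required.
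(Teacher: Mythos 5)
Your proposal is correct and coincides with the paper's intended argument: the Corollary is stated without proof precisely because it is the direct specialization of Theorem \ref{auxth}(i) to $v=\log|f|$, whose Riesz measure is the unit zero-counting measure $\sum_n\delta_{\l_n}$ (so the integral in \eqref{bla9} becomes the stated sum), and $|f(0)|\ge1$ gives $v(0)\ge0$. Nothing further is needed.
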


Next, we consider the situation where the integrability assumptions are imposed on $\rho_E$ and $\rho_F$ separately.
At the moment the following partial result is available.

\begin{proposition}\label{mainth}
Let a subharmonic function $v$, $v(0)\ge0$, belong to $\css_{p,q}(E,F)$. Assume that
\begin{equation}\label{sepint}
\bigl\|\rho_E^{-p}\bigr\|_1=\int_\bt \frac{m(d\z)}{\rho_E^p(\z)}<\infty, \quad
\bigl\|\rho_F^{-q}\bigr\|_1=\int_\bt \frac{m(d\z)}{\rho_F^q(\z)}<\infty.
\end{equation}
Let $p',q'\ge0$ be nonnegative constants such that $p'+q'>\max(p,q)$. Then there is a constant $C=C(p,q,p',q')$ so that
\begin{equation}
\int_\bd \rho_E^{p'}(\l)\,\rho_F^{q'}(\l)(1-|\l|)\,\mu(d\l)\le CM\lp \bigl\|\rho_E^{-p}\bigr\|_1+\bigl\|\rho_F^{-q}\bigr\|_1\rp.
\end{equation}
\end{proposition}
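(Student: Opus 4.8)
The plan is to carry out once more the three-step scheme from the proof of Theorem \ref{auxth}, the difference being that the two-sided control \eqref{growth} is now fed into the layer-cake machinery through two \emph{one-sided} bounds coming from the separate integrability hypotheses \eqref{sepint}.

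In Step 1 I would work on the one-set domains $\oo_t(E)$ and $\oo_s(F)$. On $\oo_t(E)$ one has $\rho_E>t$, so $v\le Mt^{-p}\rho_F^{-q}$ there; since $\bigl\|\rho_F^{-q}\bigr\|_1<\infty$, Proposition \ref{harmaj} with $a=0$ gives the harmonic majorant $P_{0,q}$ of $\rho_F^{-q}$ on $\bd$ with $P_{0,q}(0)=\bigl\|\rho_F^{-q}\bigr\|_1=:A$, whence $Mt^{-p}P_{0,q}$ is a harmonic majorant of $v$ on $\oo_t(E)$. Then \eqref{blash1}, Proposition \ref{pr3} (with $\kappa:=12\pi+1$) and the inclusion \eqref{inclus} yield, upon setting $\xi:=2\kappa t\in(0,2)$,
\begin{equation*}
\int_{\{\rho_E(\l)>\xi\}}(1-|\l|)\,\mu(d\l)\le C_1 M A\,\xi^{-p},\qquad 0<\xi<2 .
\end{equation*}
Interchanging the roles of $E$ and $F$ and using $\bigl\|\rho_E^{-p}\bigr\|_1=:B<\infty$ gives the symmetric estimate with $B,q,\eta$ in place of $A,p,\xi$. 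Intersecting the two sets,
\begin{equation*}
\int_{\{\rho_E(\l)>\xi,\ \rho_F(\l)>\eta\}}(1-|\l|)\,\mu(d\l)\le C_1 M\min\bigl\{A\xi^{-p},\,B\eta^{-q}\bigr\},\qquad 0<\xi,\eta<2 .
\end{equation*}

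In Step 3 this is inserted into the two-dimensional LCR. Assume first $p',q'>0$; Proposition \ref{pr5} with $\s=(1-|\l|)\mu$, $f=\rho_E$, $g=\rho_F$, $\a=p'$, $\b=q'$ reduces the claim to controlling
\begin{equation*}
\int_0^2\!\!\int_0^2 \xi^{p'-1}\eta^{q'-1}\min\bigl\{A\xi^{-p},\,B\eta^{-q}\bigr\}\,d\xi\,d\eta
\end{equation*}
by a constant times $A+B$. Here I would use the elementary bound $\min\{a,b\}\le a^{\l}b^{1-\l}$, valid for every $\l\in[0,1]$, to dominate the integrand by $A^{\l}B^{1-\l}\,\xi^{p'-1-p\l}\,\eta^{q'-1-q(1-\l)}$; the double integral over $(0,2)^2$ is finite exactly when $\l<p'/p$ and $1-\l<q'/q$. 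Such a $\l$ exists because the hypothesis $p'+q'>\max(p,q)=:m$ forces $p'/p+q'/q\ge p'/m+q'/m=(p'+q')/m>1$, so the interval $\bigl(\max\{0,1-q'/q\},\min\{1,p'/p\}\bigr)$ is nonempty. Fixing such a $\l$, the double integral is a constant $C=C(p,q,p',q')$, and the weighted arithmetic--geometric mean inequality gives $A^{\l}B^{1-\l}\le\l A+(1-\l)B\le A+B$, which is the desired conclusion. The degenerate cases $p'=0$ or $q'=0$ follow the same lines with the one-dimensional LCR \eqref{lcr}, since then the remaining exponent already exceeds $m$.

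I do not foresee a real obstacle here; the only steps that are not a mechanical repetition of the argument for Theorem \ref{auxth} are the implication $p'+q'>\max(p,q)\Rightarrow p'/p+q'/q>1$ and the interpolation $\min\{a,b\}\le a^{\l}b^{1-\l}$, which together are precisely what makes the exponent bookkeeping of Step 3 close.
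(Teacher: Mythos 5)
Your argument is correct, but the way you merge the two one--set estimates differs from the paper's. The paper treats Proposition \ref{mainth} as a quick corollary of Theorem \ref{auxth}(ii): taking $(a,b)=(0,q)$ and $(a,b)=(p,0)$ there gives
$\int_\bd \bigl(\rho_E^{p+\ep}+\rho_F^{q+\ep}\bigr)(1-|\l|)\,\mu(d\l)\le CM\bigl(\|\rho_E^{-p}\|_1+\|\rho_F^{-q}\|_1\bigr)$,
and then the pointwise inequality $x^{p'}y^{q'}\le 2^{p'+q'-\min(p,q)-2\ep}\bigl(x^{p+\ep}+y^{q+\ep}\bigr)$ for $0\le x,y\le 2$ (valid once $0<\ep<\tfrac12(p'+q'-\max(p,q))$, by comparing with $\max(x,y)^{p'+q'}$) finishes the proof in one line. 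You instead go back inside the proof of Theorem \ref{auxth}(ii), keep the tail bounds $\s(\{\rho_E>\xi\})\le C_1MA\xi^{-p}$, $\s(\{\rho_F>\eta\})\le C_1MB\eta^{-q}$ rather than their integrated (moment) consequences, intersect them into a $\min$, and interpolate via $\min\{a,b\}\le a^{\l}b^{1-\l}$ inside the two-dimensional layer-cake integral; your verification that a feasible $\l$ exists (through $p'/p+q'/q\ge (p'+q')/\max(p,q)>1$) and your handling of the degenerate cases $p'=0$, $q'=0$ by the one-dimensional LCR are all sound, and all the ingredients you quote (Proposition \ref{harmaj} with $a=0$, \eqref{blash1}, Proposition \ref{pr3}, \eqref{inclus}, Proposition \ref{pr5}) are used within their hypotheses. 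The paper's route is shorter and uses Theorem \ref{auxth}(ii) as a black box; yours is self-contained and performs the interpolation at the level of distribution functions rather than pointwise on $\rho_E(\l),\rho_F(\l)$, which is slightly more machinery for the same conclusion but makes the role of the hypothesis $p'+q'>\max(p,q)$ equally transparent. Both yield a constant depending only on $p,q,p',q'$.
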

\begin{proof}
We focus on two particular cases of Theorem \ref{auxth}, namely, $a=0, b=q$ and $a=p, b=0$. The corresponding conditions \eqref{int3}
agree with \eqref{sepint}. It follows from \eqref{blaii} that
\begin{equation}\label{bla10}
\int_\bd \lp \rho_E^{p+\ep}(\l)+\rho_F^{q+\ep}(\l)\rp (1-|\l|)\,\mu(d\l)
\le CM\lp \bigl\|\rho_E^{-p}\bigr\|_1+\bigl\|\rho_F^{-q}\bigr\|_1\rp
\end{equation}
for arbitrary $\ep>0$. We choose this parameter from the condition
\begin{equation}\label{ep12}
0<\ep<\frac{p'+q'-\max(p,q)}2\,.
\end{equation}

The argument below is quite elementary. Let $0\le x,y\le2$. If $y\le x$, we have, by \eqref{ep12},
$$ x^{p'}\,y^{q'}=x^{p'+q'}\le 2^{p'+q'-p-\ep}\,x^{p+\ep}. $$
Similarly, for $x\le y$
$$ x^{p'}\,y^{q'}=y^{p'+q'}\le 2^{p'+q'-q-\ep}\,y^{q+\ep}. $$
So, for each $0\le x,y\le2$ we have
$$ x^{p'}\,y^{q'}\le C\lp x^{p+\ep}+y^{q+\ep}\rp, \qquad C=2^{p'+q'-\min(p,q)-2\ep}. $$
It remains only to put $x:=\rho_E(\l)$, $y:=\rho_F(\l)$ and make use of \eqref{bla10}. The proof is complete.
\end{proof}

\begin{remark}
In some instances the assumption $v(0)\ge0$ looks somewhat restrictive. If $-\infty<v(0)<0$, one can apply the above results to
the function $v_1(z)=v(z)-v(0)$, which belongs to the same class $\css_{p,q}(E,F)$. But now the constant $M$ depends on $v$, so
we actually have quantitative Blaschke-type conditions. For example,
\begin{equation}\label{quant}
\int_\bd \rho_E^{p-a+\ep}(\l)\,\rho_F^{q-b+\ep}(\l)(1-|\l|)\,\mu(d\l)<\infty
\end{equation}
holds in place of \eqref{bla9}.

If $v(0)=-\infty$, consider the Poisson integral in the disk $|z|<1/2$ with the boundary value $v$
$$
h(z):=\int_\bt\frac{1-|2z|^2}{|\z-2z|^2}\,v(\z/2)m(d\z).
$$
 Since $v$ is upper semicontinuous, we see that
$\lim_{z'\to z}h(z')\le v(z)$ for each $z$ with $|z|=1/2$ . By \cite[Theorem 2.4.5]{Ran} the function
 $$
v_1(z)=\left\{\begin{array}{ccc}\max(v(z),h(z))&\hbox{for}&|z|<1/2,
\\ v(z)&\hbox{for}& |z|\ge1/2\end{array}\right.
 $$
is subharmonic in $\bd$, and the restriction of its Riesz measure $\mu_1$ on the set
$\{z\in\bd:\,|z|>1/2\}$ agrees with $\mu$. Therefore,
$$
\int_\bd\r_E^{p-a}(\l)\r_F^{q-b}(\l)(1-|\l|)\,\Bigl(\mu_1(d\l)-\mu(d\l)\Bigr)=O(1).
$$
Since $v_1(0)>-\infty$, we again get the conclusions of the quantitative type similar to \eqref{quant}.
\end{remark}

\smallskip

We complete the paper with the result which demonstrates the optimal character of the bound \eqref{bla9} in Theorem \ref{auxth}.

Given a compact set $K\subset\bt$, define the value
$$ \d(K):=\sup\{d\ge0: \ \rho_K^{-d}\in L^1(\bt)\}. $$
It is clear that $0\le \d(K)\le1$. The equality
\begin{equation}\label{carl}
\int_\bt \frac{m(d\z)}{\rho_K^d(\z)}=2^{-d}+dI(d,K), \quad d>0, \qquad  I(d,K):=\int_0^2 \frac{m(K_t)}{t^{d+1}}\,dt
\end{equation}
follows easily from the LCR theorem \eqref{lcr}, see \cite[formula (15)]{fg1}. The characteristic $I(d,K)$ appeared already in \cite{car}.
So,
$$ \d(K)=\sup\{d\ge0: \ I(d,K)<\infty\}. $$

Choose two disjoint compact sets $E$ and $F$ with $\d(E)>0$, $\d(F)>0$. By the definition,
$$ \rho_E^{-\d(E)+\ep}\in L^1(\bt), \quad \rho_F^{-\d(F)+\ep}\in L^1(\bt), \quad 0<\ep<\min(\d(E), \d(F)), $$
and so \eqref{int3} holds with $a=\d(E)-\ep$, $b=\d(F)-\ep$ ($E$ and $F$ are disjoint). On the other hand,
$$ \rho_E^{-\d(E)-\ep}\notin L^1(\bt), \quad \rho_F^{-\d(F)-\ep}\notin L^1(\bt), $$
and, by \eqref{carl},
$$ I(\d(E)+\ep, E)=I(\d(F)+\ep, F)=+\infty. $$
In notation \eqref{set1} we take $t$, $s$ small enough so that
\begin{equation}\label{set1.1}
D_t^c(E)\cap D_s^c(F)=\emptyset, \quad D_t^c(K):=\bd\backslash D_t(K).
\end{equation}

Let $p>\d(E)$, $q>\d(F)$, and consider the function
$$ v_0(z)=v_E(z)+v_F(z)=\frac1{\rho_E^p(z)}+\frac1{\rho_F^q(z)}\,, \quad v_0\in\css_{p,q}(E,F). $$
Denote by $\mu_E$  $(\mu_F)$ the Riesz measure of the subharmonic function $v_E$  $(v_F)$. The result in Theorem \ref{auxth}, (i),
states that
$$ \int_\bd \rho_E^{p-\d(E)+2\ep}(\l)\,\rho_F^{q-\d(F)+2\ep}(\l)\,(1-|\l|)\,\mu_0(d\l)<\infty, \quad \mu_0:=\mu_E+\mu_F $$
is the Riesz measure of $v_0$ and $\ep>0$ is small enough.

\begin{theorem}\label{inverse}
For $0<\ep<\min(p-\d(E), q-\d(F))$ the relation holds
\begin{equation}\label{inver1}
I:=\int_\bd \rho_E^{p-\d(E)-\ep}(\l)\,\rho_F^{q-\d(F)-\ep}(\l)\,(1-|\l|)\,\mu_0(d\l)=+\infty.
\end{equation}
\end{theorem}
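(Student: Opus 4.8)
The plan is to show that $\mu_E$ alone already contributes an infinite integral; interchanging $E$ and $F$ would treat $\mu_F$. Since the integrand in \eqref{inver1} is nonnegative and $\mu_0\ge\mu_E$, it suffices to prove
\[
\int_\bd \rho_E^{p-\d(E)-\ep}(\l)\,\rho_F^{q-\d(F)-\ep}(\l)\,(1-|\l|)\,\mu_E(d\l)=+\infty .
\]
Set $d:=\d(E)+\ep$ and $d':=\d(F)+\ep$. The hypothesis $0<\ep<\min(p-\d(E),q-\d(F))$ gives $\d(E)<d<p$ and $\d(F)<d'<q$, so $p-d>0$, $q-d'>0$ and $p/d>1$.

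The key point is that the weight $\rho_E^{p-d}$ can be absorbed into the measure. Put $w:=\rho_E^{-d}$, which (as in the proof of Proposition~\ref{harmaj}) is subharmonic on $\bd$, continuous, and bounded away from $0$ there; let $\nu$ be its Riesz measure. Then $v_E=\rho_E^{-p}=w^{p/d}=\phi\circ w$ with $\phi(x):=x^{p/d}$ convex and increasing. Mollifying $w$ and using the chain rule $\Delta(\phi\circ w_\s)=(\phi'\circ w_\s)\Delta w_\s+(\phi''\circ w_\s)|\nabla w_\s|^2\ge(\phi'\circ w_\s)\Delta w_\s$ for the smooth approximants $w_\s\downarrow w$, one obtains in the limit the inequality of measures
\[
\mu_E\ \ge\ (\phi'\circ w)\,\nu\ =\ \frac pd\,\rho_E^{-(p-d)}\,\nu
\]
on $\bd$ (the limiting step being legitimate because $w$ is continuous and $\phi'\circ w$ is locally bounded on $\bd$). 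Inserting this and cancelling $\rho_E^{p-d}\cdot\rho_E^{-(p-d)}=1$, the task reduces to showing
\[
\int_\bd \rho_F^{q-d'}(\l)\,(1-|\l|)\,\nu(d\l)=+\infty .
\]

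To prove this I would localize near $E$. Since $E\cap F=\emptyset$, fix $\rho_0$ with $0<\rho_0<\tfrac14\operatorname{dist}(E,F)$ and, in addition, such that $\{\rho_E=\rho_0\}\cap\bd$ has finite length (valid for a.e.\ $\rho_0$ by the coarea formula for the Lipschitz function $\rho_E$). On $\{\rho_E<\rho_0\}\cap\bd$ one has $\rho_F>\tfrac34\operatorname{dist}(E,F)$, so $\rho_F^{q-d'}$ is bounded below by a positive constant there, and it is enough to show $\int_{\{\rho_E<\rho_0\}\cap\bd}(1-|\l|)\,\nu(d\l)=+\infty$. For this, replace $w$ by the subharmonic truncation $\wt w:=\max(w,\rho_0^{-d})$: its Riesz measure $\wt\nu$ vanishes on $\{\rho_E>\rho_0\}$ and near $0$ (as $\rho_E(0)=1>\rho_0$), coincides with $\nu$ on the open set $\{\rho_E<\rho_0\}$, and on $\{\rho_E=\rho_0\}\cap\bd$ is a finite surface layer (bounded density $\asymp\rho_0^{-d-1}$ carried by a curve of finite length). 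Since $d>\d(E)$ we have $\rho_E^{-d}\notin L^1(\bt)$ while $\rho_0^{-d}$ is bounded, so $\wt w\notin L^1(\bt)$; by Fatou's lemma and the continuity of $\rho_E$, the circular means of $\wt w$ tend to $+\infty$ as $r\to1^-$, so the Riesz representation on $|z|<r$ \cite[Theorem~4.5.4]{Ran} (Jensen's formula) forces $\int_\bd\log\tfrac1{|\l|}\,\wt\nu(d\l)=+\infty$, hence $\int_\bd(1-|\l|)\,\wt\nu(d\l)=+\infty$ (using that $\wt\nu$ has no mass near $0$ and $\log\tfrac1{|\l|}\le2(1-|\l|)$ for $|\l|\ge\tfrac12$). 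Subtracting the finite layer on $\{\rho_E=\rho_0\}$ gives $\int_{\{\rho_E<\rho_0\}\cap\bd}(1-|\l|)\,\nu(d\l)=+\infty$, which completes the argument.

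I expect the comparison $\mu_E\ge\frac pd\rho_E^{-(p-d)}\nu$ to be the main obstacle: one must check that the chain-rule inequality for the distributional Laplacian survives the singular part of $\nu$ (carried by the ridge of $\rho_E$), which the mollification handles precisely because $\phi'\circ w$ is continuous. A lesser technical point is the finiteness of the truncation's surface layer, which is why $\rho_0$ is taken generic. An alternative to the cancellation step would be to bound $\mu_E\ge c\,\rho_E^{-p-2}\,dA$ directly on the thin collars $\{t/2<\rho_E<t\}\cap\bd$ near $E$ and sum over dyadic $t$, recognising the outcome as the Carleson-type integral $I(\d(E)+\ep,E)$ of \eqref{carl}, which is $+\infty$; that route, however, requires controlling $\Delta\rho_E$ and the dyadic fluctuations of $m(E_t)$, and is messier.
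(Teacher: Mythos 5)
Your overall skeleton coincides with the paper's: both proofs discard $\mu_F$, localize to a neighborhood $\{\rho_E<\rho_0\}$ of $E$ where disjointness makes $\rho_F^{q-\d(F)-\ep}$ bounded below by a positive constant, and then reduce everything to the divergence of the one-set integral $\int\rho_E^{p-\d(E)-\ep}(\l)(1-|\l|)\,\mu_E(d\l)$ near $E$. Where you genuinely diverge is in how that divergence is established: the paper simply cites \cite[Theorem 2]{fg1} for $\int_\bd\rho_E^{p-\d(E)-\ep}(1-|\l|)\,\mu_E(d\l)=+\infty$ and subtracts the finite contribution of $D_t(E)$, whereas you prove it from scratch. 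Your two main devices --- the measure comparison $\mu_E\ge\frac{p}{d}\,\rho_E^{-(p-d)}\nu$ obtained by composing $w=\rho_E^{-d}$ with the convex increasing $\phi(x)=x^{p/d}$ and mollifying, and the Fatou/Jensen argument showing that $\rho_E^{-d}\notin L^1(\bt)$ forces $\int(1-|\l|)\,\nu(d\l)=+\infty$ --- are both sound (the mollification limit is legitimate exactly as you say, since $w$ is continuous and bounded away from $0$ on compacta of $\bd$, so $\phi'\circ w_\s\to\phi'\circ w$ locally uniformly while $\Delta w_\s\to 2\pi\nu$ weakly with locally bounded masses). The payoff of your route is self-containedness; the cost is length and one extra technical burden, discussed next.

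The one step that is not adequately justified is the finiteness of the $(1-|\l|)$-mass of the surface layer of $\wt\nu$ on $\{\rho_E=\rho_0\}\cap\bd$: the ``bounded density $\asymp\rho_0^{-d-1}$ on a curve of finite length'' heuristic presupposes a jump formula for the Laplacian of $\max(w,c)$ across a level set of a function that is merely Lipschitz, and the level curve reaches $\bt$, so local interior mass bounds do not apply uniformly. This is repairable, and in fact the paper's own machinery does it more cleanly: since $w=\rho_E^{-d}$ is bounded above by $\t^{-d}$ on $\oo_\t(E)$ with $\t=\rho_0/(2(12\pi+1))$, relation \eqref{blash1} together with Proposition \ref{pr3} and the inclusion \eqref{inclus} give $\int_{\{\rho_E>\rho_0\}\cap\bd}(1-|\l|)\,\nu(d\l)<\infty$ directly; combined with your proof that $\int_\bd(1-|\l|)\,\nu(d\l)=+\infty$ (which needs no truncation at all, only Fatou, Jensen's formula and the local finiteness of the potential of $\nu$ at $0$), this yields $\int_{\{\rho_E<\rho_0\}}(1-|\l|)\,\nu(d\l)=+\infty$ and lets you delete the truncation $\wt w$ and its surface layer entirely. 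This is also, in essence, how the paper disposes of the analogous finite piece ($I_2<\infty$ ``thanks to the property of the Riesz measure''). With that substitution your argument is complete and independent of \cite{fg1}.
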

\begin{proof}
We bound the integral $I$ from below in a few steps. Clearly,
\begin{equation*}
\begin{split}
I &\ge \int\limits_{D_t^c(E)} \rho_E^{p-\d(E)-\ep}(\l)\,\rho_F^{q-\d(F)-\ep}(\l)\,(1-|\l|)\,\mu_0(d\l) \\
  &\ge \int\limits_{D_t^c(E)} \rho_E^{p-\d(E)-\ep}(\l)\,\rho_F^{q-\d(F)-\ep}(\l)\,(1-|\l|)\,\mu_E(d\l)=I_1.
\end{split}
\end{equation*}
By \eqref{set1.1}, one has $\rho_F(\l)>s$ as long as $\l\in D_t^c(E)$, so
\begin{equation}\label{inver2}
I_1\ge s^{q-\d(F)-\ep}\,\int\limits_{D_t^c(E)} \rho_E^{p-\d(E)-\ep}(\l)\,(1-|\l|)\,\mu_E(d\l).
\end{equation}

We apply \cite[Theorem 2]{fg1}, which claims that now
\begin{equation*}
\begin{split}
&{} \int_\bd \rho_E^{p-\d(E)-\ep}(\l)\,(1-|\l|)\,\mu_E(d\l) =\int\limits_{D_t(E)}\rho_E^{p-\d(E)-\ep}(\l)\,(1-|\l|)\,\mu_E(d\l) \\
&+ \int\limits_{D_t^c(E)}\rho_E^{p-\d(E)-\ep}(\l)\,(1-|\l|)\,\mu_E(d\l)=I_2+I_2^c=+\infty.
\end{split}
\end{equation*}
But $I_2<\infty$ thanks to the property of the Riesz measure, so $I_2^c=+\infty$. The relation \eqref{inver1} follows now from \eqref{inver2}.
\end{proof}

\end{document}